\newcommand{\R}{{\mathbb R}}
\begin{document}

\markboth{Descombes, Duarte, Dumont, Louvet, Massot}{Adaptive Time Splitting Method for Multi-scale Evolutionary Partial Differential Equations}

%%%%%%%%%%%%%%%%%%% Publisher's Area please ignore %%%%%%%%%%%%%%%%%%%%%%
\catchline{}{}{}{}{}
%%%%%%%%%%%%%%%%%%%%%%%%%%%%%%%%%%%%%%%%%%%%%%%%%%%%%%%%%%%%%%%%%%%%%%%%%

\title{ADAPTIVE TIME SPLITTING METHOD FOR MULTI-SCALE 
EVOLUTIONARY PARTIAL DIFFERENTIAL EQUATIONS}

\author{ST\'EPHANE DESCOMBES}

\address{Laboratoire J. A. Dieudonn\'e -
UMR CNRS 6621,
Universit\'e Nice Sophia Antipolis,
Parc Valrose, 
Nice Cedex 02, 06108, France\\
sdescomb@unice.fr}

\author{MAX DUARTE}
\address{Laboratoire EM2C - UPR CNRS 288, Ecole Centrale
Paris, Grande Voie des Vignes \\
 Chatenay-Malabry Cedex, 92295, France \\
max.duarte@em2c.ecp.fr}

\author{THIERRY DUMONT}

\address{Institut Camille Jordan - UMR CNRS 5208,
Universit\'e de Lyon,
Universit\'e Lyon 1,
INSA de Lyon 69621, 
Ecole Centrale de Lyon,
43 Boulevard du 11 novembre 1918, \\
 Villeurbanne Cedex, 69622, France\\
tdumont@math.univ-lyon1.fr}

\author{VIOLAINE LOUVET}

\address{Institut Camille Jordan - UMR CNRS 5208,
Universit\'e de Lyon,
Universit\'e Lyon 1,
INSA de Lyon 69621, 
Ecole Centrale de Lyon,
43 Boulevard du 11 novembre 1918,\\
Villeurbanne Cedex, 69622, France,\\
louvet@math.univ-lyon1.fr}

\author{MARC MASSOT}

\address{Laboratoire EM2C - UPR CNRS 288, Ecole Centrale
Paris, Grande Voie des Vignes, \\
 Chatenay-Malabry Cedex, 92295
France,\\
marc.massot@em2c.ecp.fr}

\maketitle

\begin{history}
\received{Day Month Year}
\revised{Day Month Year}
%\accepted{(Day Month Year)}
\end{history}

\begin{abstract}
This paper
introduces an adaptive time splitting
technique 
for the solution of stiff evolutionary PDEs
that guarantees
an effective error control
of the simulation,
independent of the
fastest physical time scale
for highly unsteady
problems.
The strategy 
considers a second order Strang method and 
another lower order embedded splitting scheme
that takes into account potential loss of order due to the stiffness
featured by time-space multi-scale phenomena.
The scheme 
is then built upon
a precise numerical analysis of the method 
and a
complementary numerical procedure,
conceived
to overcome classical restrictions of adaptive time stepping
schemes 
based on lower order embedded methods,
whenever asymptotic estimates fail to predict the
dynamics of the problem.
The performance of the method in terms of control 
of integration errors is evaluated by
numerical simulations of stiff propagating waves coming
from nonlinear chemical dynamics models as well
as
highly multi-scale
nanosecond repetitively pulsed gas discharges,
which allow to illustrate the method capabilities
to consistently describe 
a broad spectrum of time scales 
and different physical scenarios
for consecutive discharge/post-discharge phases.
\end{abstract}

\keywords{Time adaptive integration; error control; operator splitting; reaction-diffusion; multi-scale reaction waves; multi-scale discharge.}

\ccode{AMS Subject Classification: 65G20, 65M15, 65Z05, 65L04, 35K57, 35A35, 35C07}

\begin{center}
\textbf{Dedication} 
\end{center}
Cet article est d\'edi\'e \`a la m\'emoire de Michelle Schatzman. 
Sp\'ecialiste des m\'ethodes de d\'ecomposition d'op\'erateur, sa grande clairvoyance
scientifique lui a permis d'orienter plusieurs chercheurs d\'ebutants  
sur ce sujet \`a un moment o\`u il pouvait sembler achev\'e. Michelle aimait dire qu'il n'y a  
pas de fronti\`ere entre les branches des math\'ematiques et que seule une grande culture
permet de naviguer dans cette for\^et et d'y trouver les bonnes  
techniques pour r\'esoudre un probl\`eme. Ce travail est un hommage; \`a la crois\'ee des  
math\'ematiques et de leur applications effectives,
il tente d'illustrer cette assertion. Michelle, ton dynamisme, ton  
humour et ton plaisir \`a parler math\'ematiques nous manquent.

\section{Introduction}
Numerical simulations of multi-scale phenomena are commonly used
for modeling purposes
in many applications such as combustion, plasma discharges,
chemical vapor deposition
or air pollution modeling.
In general, all these models raise se\-ve\-ral difficulties created by the high number of unknowns,
the wide range of temporal scales due to large and 
detailed chemical kinetic mechanisms, as well as  
steep spatial gradients 
associated with localized fronts of high chemical activity.
In this context, faced with the induced stiffness
of these time dependent problems,
a high performing numerical strategy for multidimensional simulations 
considers a 
 time operator splitting 
with dedicated high order time integration methods for reaction and diffusion problems, in order to 
exploit efficiently the special features
of each problem. 
Such a numerical strategy for time discretization has been presented 
in \cite{article_avc} and extended in  \cite{article_mr} with multiresolution  techniques
for adaptive space discretization. 
The main idea is to use a second order
Strang scheme
to solve independently
reaction and diffusion problems
in three successive fractional steps,
taking into account that for 
multi-scale phenomena
better performances are usually expected while ending
the splitting scheme by the part involving the fastest scales,
as it has been proven in \cite{Descombes04}.
Therefore, based on these theoretical results and on the construction of the splitting
solver, this strategy
provides an accurate resolution of such stiff problems
even for
splitting time steps much larger than either
the fastest time scales involved in the source terms
or the time step restrictions related to spatial grid discretizations.

Up to our days,
fixed splitting time step schemes have been 
largely used in the literature \cite{Knio99,Singer2006150,OranBoris2001},
and the relevance of our numerical strategy \cite{article_avc,article_mr}
has been evaluated in the framework of
stiff reaction waves for which a constant splitting time step is more than reasonable
to precisely describe the global coupling of the split phenomena.
However,
such a fixed time stepping strategy
would surely lead to major difficulties and limitations for problems 
describing highly non stationary models with
very different dynamics such as 
flame ignition and propagation or repetitively pulsed plasmas discharges \cite{Pilla:2006},
all the more in the framework of large scale simulations. 
It is thus essential to be able to dynamically adapt splitting time steps for the simulation of such multi-scale problems with strongly evolving dynamics.

In order to guarantee a precise description of the coupled multi-scale phenomenon,
this splitting time step adaptation strategy must rely 
on a local error 
estimate, which can be obtained by 
considering a lower order embedded method.
This is a common practice for ODEs numerical solution \cite{Hairer02}, which 
yields very efficient and eventually high order 
methods 
for which
time steps can dynamically adapt according to a given tolerance,
to sufficiently small values in order to cope with the fastest time scales of the problem.
However,
it is well known
that for stiff problems and larger accuracy tolerances, the order
of the methods can degenerate, yielding non 
reliable error estimates and possibly, much larger global errors than expected by the given tolerance. 
Such a scenario will be all the more valid in the framework of the resolution of PDEs
where fine grid and large gradients coupled with stiff source terms lead to especially stiff problems. 
In particular,
our numerical strategy \cite{article_avc,article_mr} is built in such a way that the main source of error is the 
splitting error, each building block relying on high order adaptive and dedicated numerical methods;
therefore,
it is essential not only to construct a reliable splitting error estimate,
but also to 
guarantee an effective error control within 
the so claimed accuracy 
tolerance.

In this article, we present a novel strategy to control the local splitting error with two different
splitting schemes, the first one is a second order Strang technique whereas
 the second one considers a 
shifted Strang formula,
built with a $\varepsilon$-shift in time of the classical Strang formula.
This second method 
is embedded because
the first substep is common to both methods
to reduce computational cost, and
inherits from the Strang scheme,
stability properties and the
same numerical behavior 
in the context of stiff problems;
nevertheless, 
it is only of order
one due to the slightly lack of symmetry. 
In the first part of the paper, 
we conduct a complete error estimate of 
this new splitting method 
in order to characterize the local error estimate that will be 
computed out of first and second order splitting resolutions.
We define then
a domain of application of the adaptive method in which the local error estimates
guarantee an effective error control of the solution according to the given tolerance.
The key issue is related to the evaluation of a maximum splitting time step, called the critical splitting time step, 
as a function of $\varepsilon$, for which local error estimates are valid. 
A numerical validation of the theoretical estimates is performed 
in the framework of traveling reaction waves for a simple PDE, 
for which the threshold and critical time steps can be also theoretically estimated and compared with 
numerical results.

However, 
in order to extend the numerical strategy to more realistic configurations, 
for which theoretical evaluation of critical time steps 
is out of reach, 
we develop a complementary and general numerical procedure based on 
numerical estimates,
that allows to establish the domain of application of the method
by simultaneously choosing the appropriate $\varepsilon$ for a given tolerance.
This procedure 
is tested 
in the framework of nonlinear chemical dynamics of Belousov-Zhabotinsky (BZ) reactions in a very stiff case in both time and space,
yielding satisfactory results. 
As a consequence,
a final numerical strategy is conceived 
that considers adaptive splitting time steps and 
that evaluates simultaneously critical time steps as well as best-suited $\varepsilon$, 
in order to 
guarantee error control for a given accuracy tolerance of the simulation
with splitting time steps as large as possible.
The relevance of the proposed strategy is first evaluated for the BZ reaction-diffusion equations, 
whereas 
a more complex problem issued from the simulation of multi-pulsed gas discharges
 involving several dynamics with very different typical time scales, constitutes the second test-case. 
It is shown that for this second very stiff reaction-diffusion system, 
splitting time steps can cover a range of three orders of magnitude and always guarantee a proper 
respect of the prescribed tolerance.

The paper is organized as follows:
section \ref{AdaptSec} describes
the adaptive time splitting strategy;
in section \ref{analyse},  we perform
 the numerical analysis of the proposed method
 and identify the  limit of validity of the local error estimate which is at the heart of the adapting procedure.
Section \ref{KPP} is devoted to the 
validation of the previous theoretical estimates
and to a theoretical/numerical study of the critical splitting  time steps 
in the context of a 1D reaction-diffusion
problem featuring traveling wave solutions.
In section \ref{Strategy} we present the final numerical strategy 
that includes an additional numerical procedure to evaluate critical time steps
and suitable $\varepsilon$.
The potential of the method is  illustrated for the proposed two test-cases in section \ref{Appli}.
We end in the last part with some concluding remarks.

\section{Adaptive Time Splitting Method}\label{AdaptSec}
Let us first set the general mathematical framework of this work.
A class of multi-scale phenomena can be modeled by general 
reaction-diffusion systems of type:
\begin{equation}\label{sys_reac_diff_gen}
\left.
\begin{array}{ll}
\partial _t \mathbf{u} - \partial _{\mathbf{x}} \left( \mathbf{D}(\mathbf{u})
 \partial _{\mathbf{x}} \mathbf{u} \right)
=\mathbf{f}\left(\mathbf{u}\right),&
\quad
\mathbf{x}\in \mathbb{R}^d,
\ t>0,\\[1ex]
\mathbf{u}(0,\mathbf{x})=\mathbf{u}_0(\mathbf{x}),&
\quad \mathbf{x}\in \mathbb{R}^d,
\ t=0,
\end{array}\right\}
\end{equation}
where $\mathbf{f}:\mathbb{R}^{m}\to\mathbb{R}^{m}$ and 
$\mathbf{u}:\mathbb{R}\times \mathbb{R} ^d \to\mathbb{R}^{m}$,
with a tensor of order $d\times d\times m$ as
diffusion matrix $\mathbf{D}(\mathbf{u})$.

In the following we will focus on the simplified  
case of linear diagonal diffusion, 
for which the elements of the diffusion matrix are 
written as 
$D_{i_1i_2i_3}(\mathbf{u})=D_{i_3} \delta_{i_1i_2}$
for some positive indices $i_1$, $i_2$, $i_3$, so that
the diffusion operator reduces to the heat operator
with
some scalar diffusion coefficient $D_{i_3}$ for component
$u_{i_3}$ of $\mathbf{u}$.
A scalar one-dimensional model is considered
in order to simplify the presentation,
taking into account that extension into higher dimensions of $\mathbf{x}$ or
$\mathbf{u}$ is straightforward:
\begin{equation}\label{sys_rea_dif_u}
\left.
\begin{array}{ll}
\partial_t u-\partial^2_{x} u =f(u),& \quad x\in \mathbb{R},\ t>0,\\[1ex]
u(0,x)=u_0(x),& \quad x\in \mathbb{R}, \ t=0,
\end{array}\right\}
\end{equation}
where $f$ and $u_0$ are smooth functions.
We denote by  $T^t u_0$ the solution of \eqref{sys_rea_dif_u}.

Introducing standard decoupling of the diffusion and reaction parts of (\ref{sys_rea_dif_u}),
we denote by $X^tu_0$ the solution of the diffusion equation: 
\begin{equation}\label{split_dif_u}
\begin{array}{ll}
\partial_t u_D- \partial^2_{x} u_D=0,& \quad x\in \R,\ t>0,
\end{array}
\end{equation}
with initial data $u_D(0,\cdot)=u_0(\cdot)$ after some time $t$;
and by $Y^tu_0$, the solution of the reaction part where 
spatial coordinate $x$ can be considered 
as a parameter: 
\begin{equation}\label{split_rea_u}
\begin{array}{ll}
\partial_t u_R=f(u_R),&\quad x\in \R,t>0,
\end{array}
\end{equation}
with $u_R(0,\cdot)=u_0(\cdot)$.

The two Lie approximation formulae of the solution of system (\ref{sys_rea_dif_u}) 
are then defined by
\begin{equation}\label{lie_u}
L^t_{1}u_0=X^tY^{t}u_0,
\quad
L^t_{2}u_0=Y^{t}X^tu_0,
\end{equation}
whereas the two Strang approximation formulae 
\cite{Strang63,Strang68} are given by
\begin{equation}\label{strang_u}
S^t_{1}u_0=X^{t/2}Y^{t}X^{t/2}u_0,
\quad
S^t_{2}u_0=Y^{t/2}X^tY^{t/2}u_0.
\end{equation}

It is well known that Lie formulae (\ref{lie_u}) (resp. Strang formulae (\ref{strang_u})) 
are an approximation of order $1$ (resp. $2$) of the exact solution of (\ref{sys_rea_dif_u}).
Higher order splitting schemes are also possible. 
Nevertheless, 
the order conditions
for such composition methods state
that either negative time substeps or complex coefficients  or non convex combinations are necessary \cite{Hairer02}. 
The formers imply usually important 
stability restrictions and more sophisticated numerical implementations. 
In the particular case of negative time steps, 
they are completely undesirable for PDEs that are ill-posed for
negative time progression.

An adaptive time stepping strategy is based on a local error estimate
which can be obtained by using two schemes of different
order, in this case 
$S^t_{1}$ or $S^t_{2}$,
locally of order $3$, 
and $L^t_{1}$ or $L^t_{2}$,
locally of order $2$.
For instance,
the Embedded Split-Step Formulae given in \cite{Thal} consider 
 $S^t_{1}$ and  $L^t_{2}$ or $S^t_{2}$ and $L^t_{1}$, 
noticing that
 \begin{equation*}
L^t_{1}u_0=X^{t}Y^{t/2}Y^{t/2}u_0,
\end{equation*}
where $Y^{t/2}u_0$ is also used to compute $S^t_{2}u_0$.
Nevertheless,
in the context of multi-scale phenomena,
order reductions may appear due to short-life transients 
associated with the fastest variables
when one considers splitting time steps
larger than the fastest scales.
It has been proved in \cite{Descombes04} 
that better performances are expected while ending
the splitting scheme by 
the part involving the fastest time scales of the phenomenon.
In particular, in the case of linear diagonal diffusion problems, 
no order loss is expected for the $L_2^{t}$ and 
$S_2^{t}$ schemes
when fast scales are present in the reactive term.
Therefore,
the embedding procedure must be carefully conceived
taking into consideration these theoretical studies.

We introduce a shifted Strang formula 
\begin{equation}\label{strangdec}
S^t_{2,\varepsilon}u_0=Y^{(1/2-\varepsilon)t}X^tY^{(1/2+\varepsilon)t}u_0,
\end{equation}
locally of order $2$, due to the lack of symmetry, for $\varepsilon$ in $ [-1/2,0)\cup(0,1/2]$.
In this way, a local error estimate is computed based on
two solutions for which orders are guaranteed and a potential loss of order is simultaneous,
following 
\begin{equation}
\left(
\begin{array}{c}
 S^{\Delta t}_{2}u_0\\[1ex]
S^{\Delta t}_{2,\varepsilon}u_0
\end{array}
\right)
=
\left(
\begin{array}{c}
 Y^{\Delta t/2}X^{\Delta t}
Y^{\Delta t/2}
u_0\\[1ex]
Y^{(1/2-\varepsilon){\Delta t}}X^{\Delta t}
Y^{(1/2+\varepsilon)\Delta t}
%Y^{\varepsilon{\Delta t}}
%Y^{\Delta t/2}
u_0
\end{array}
\right),
\end{equation}
for some splitting time step $\Delta t >0$.
Embedding is accomplished as long as $\varepsilon$ is 
different from  $-1/2$,
that is $S^{\Delta t}_{2,\varepsilon}u_0$ different from $L^t_{2}u_0$.
On the other hand,
if  $\varepsilon$ is equal to  $1/2$,
$S^{\Delta t}_{2,\varepsilon}u_0$ is defined as $L^t_{1}u_0$,
which it is not suitable for stiff configurations as it was previously discussed
\cite{Descombes04}.
Therefore, 
$\varepsilon$ should be contained in  $(-1/2,0)\cup(0,1/2)$.
Shifted $S^{\Delta t}_{1,\varepsilon}u_0$
is defined in a similar way and 
depending on the multi-scale character of the problem, it
might be the appropriate choice
along with $S^{\Delta t}_{1}u_0$.

Taking into account that
\begin{eqnarray}\label{differ}
S^{\Delta t}_{2}u_0-
S^{\Delta t}_{2,\varepsilon}u_0
&=& 
S^{\Delta t}_{2}u_0-T^{\Delta t}u_0+
T^{\Delta t}u_0
-
S^{\Delta t}_{2,\varepsilon}u_0,
\nonumber \\ 
 &=&
 \mathcal{O}(\Delta t^3) +  \mathcal{O}(\Delta t^2) \approx \mathcal{O}(\Delta t^2),
  \end{eqnarray}
for a given accuracy tolerance $\eta$,
\begin{equation}\label{est_split_er_tol}
 \big\|S^{\Delta t}_{2}u_0-
S^{\Delta t}_{2,\varepsilon}u_0\big\| 
< \eta
\end{equation}
must be verified
in order to accept current computation with $\Delta t$,
while new time step is calculated by
\begin{equation}\label{delta_split_ef}
\Delta t^{\rm new} = \upsilon \, \Delta t  
\sqrt{\frac{\eta}
{\big\|S^{\Delta t}_{2}u_0-
S^{\Delta t}_{2,\varepsilon}u_0\big\|}},
\end{equation}
with security factor $0<\upsilon \leq 1$ close to one. 
This comes from a classical adaptive time stepping procedure
for stiff ODEs solution,
for which 
more sophisticated formulae than (\ref{delta_split_ef}) can be also considered, 
see \cite{MR1227985} for example.

The error control of these adaptive methods is fully guaranteed as long as the
orders of both, the main and the embedded integration methods, remains
valid. This is the case for small enough time steps for which asymptotic
theoretical estimates hold, 
but remains an open problem for larger time steps for which 
the validity of the formers is assumed.
This is a key point in this work, because we propose not only
a new splitting strategy with adaptive time steps as described in this section, 
but we aim also
at applications for which splitting time steps may go beyond
the fastest scales associated with each subproblem
in order to obtain important computational savings.
Therefore, a technique that guarantees consistently error control for
all possible separation scales must be pursued,
but first of all, a detailed numerical analysis of the method must be performed.
This is the goal of the following part.

\section{Numerical Analysis of the Method}\label{analyse}
In this part,
we develop the numerical analysis of the proposed method.
It is mainly based on the theoretical study of the introduced
shifted Strang formula (\ref{strangdec})
and the domain of validity of the local error estimates.
However, first of all, we introduce the Lie formalism which will
be used as mathematical tool of analysis.

\subsection{The Lie operator formalism}	
We introduce 
the Lie operator formalism in order to 
generalize the exponential of a linear operator in the context of nonlinear
operators. Let $X$ be a Banach space, $T_0 > 0$ and
$F$, an unbounded nonlinear operator from $D(F) \subset  X$ to $X$, we consider the general autonomous equation:
\begin{equation}\label{thesystem}
\left.
\begin{array}{ll}
{\displaystyle
u'(t) = F(u(t)),}
& \quad 0 < t <T_0,
\\[1ex]
u(0) = u_0,
& \quad t=0.
\end{array}
\right\}
\end{equation}
The exact solution of this evolutionary equation is (formally) given by
\begin{equation}\label{exact}
u(t)=T^t u_0, \quad 0 \leq t \leq T_0
\end{equation}
where $T^t$ is the semiflow associated with \eqref{thesystem};
in particular we can set $F(u)=\partial^2_{x} u + f(u)$ as in (\ref{sys_rea_dif_u}).
The Lie operator $D_F$ associated with $F$ is then a linear operator acting 
on the space of operators defined in $X$ \cite{Hairer02,DesThal}.
More precisely, 
for any
unbounded nonlinear operator $G$ 
from $D(G) \subset X$ to $X$ with Fr\'echet derivative $G'$,
$D_F$ maps $G$
into a new operator $D_F G$,
such that for any $v$ in $X$:
\begin{equation}\label{LieOp}
(D_F G)(v) = G'(v) F(v).
\end{equation}
Hence, by induction on $n$ with solution $u$
of \eqref{thesystem}, we obtain
\begin{equation*}
\frac{\partial^n}{\partial t^n}
G( u(t)) = (D^n_F G)(u(t)),
\end{equation*}
and a formal Taylor expansion yields
\begin{equation}\label{Taylor}
G(u(t))=
\sum_{n=0}^{+ \infty}
\frac{t^n}{n !}
\left (
\frac{\partial^n}{\partial t^n} G( u(t))\right ) \bigg \vert_{t=0}
=
\left (
\sum_{n=0}^{+ \infty}
\frac{t^n}{n !}
D^n_F G
\right )
u_0
=
\left (
 e^{t D_F} G 
\right )
u_0.
\end{equation}
If we now assume that $G$ is the identity operator $\textrm{Id}$, we obtain
\begin{equation*}
u(t)= T^t u_0 = 
\left (
 e^{t D_F}  \textrm{Id}
\right )
u_0.
\end{equation*}
Therefore,
the Lie operator is indeed a way to write the solution of a nonlinear equation
in terms of a linear but differential operator.
Following \eqref{Taylor},
an important result obtained by Gr\"obner in 1960 \cite{Grobner1960}, 
considers
the composition of two semiflows $T_1^t$ and $T_2^s$ associated with $F_1$ and $F_2$
for any $v$ in  $X$:
\begin{equation*}
T_1^t  T_2^s v = \left (
 e^{s D_{F_2}} T_1^t 
\right )
v =
 \left (
 e^{s D_{F_2}}  e^{t D_{F_1}}  \textrm{Id}
\right )
v. 
\end{equation*}

\subsection{Error analysis}
In this paragraph,
we conduct the error analysis of the approximation
of $T^t$ by $S^t_{2,\varepsilon}$ 
in a linear framework.
Then, we
extend these results to a general nonlinear configuration
given by problem \eqref{sys_rea_dif_u},
using the Lie operator formalism.
General estimates for
the approximation of $T^t$ by $S_{2}^t$
are also drawn.
We end in the last part 
with a mathematical study that shows the
domain of application of the method described 
in $\S~$\ref{AdaptSec},
for which
an effective error control
is guaranteed
within an accuracy tolerance.
To simplify
the notations
in what follows, 
we will denote $S_{2}^t$ by $S^t$ and
$S^t_{2,\varepsilon}$ by $S^t_{\varepsilon}$.

Assume that
$A$ and $B$ are linear bounded operators and define
\begin{equation*}
S^t_{\varepsilon} u_0 = e^{(1/2-\varepsilon)tA}e^{tB}e^{(1/2+\varepsilon)tA} u_0
\end{equation*}
as an approximation of $e^{t(A+B)}$.
The following theorem gives the expansion in powers of $t$
of the difference between $e^{t(A+B)}$ and $S^t_{\varepsilon}$.
We recall the definition of the brackets between $A$ and $B$:
$
[A,B] = AB - BA$.
\begin{theorem}\label{linear}
Assume that $A$ and $B$ are linear bounded operators,
for $t$ and $\varepsilon$ small enough, the following asymptotic holds
\begin{equation*}
e^{t(A+B)} u_0  - S^t_{\varepsilon}  u_0 = -  \varepsilon t^2 [A,B] u_0 +  \frac{t^3}{24}
(
\big[A,[A,B]\big]
 + 2 \big[B,[A,B]\big]) u_0
 +\mathcal{O}(\varepsilon t^3)
 +\mathcal{O}(t^4).
  \end{equation*}
\end{theorem}
\begin{proof}
Proof is straightforward by using the Taylor formula with integral remainder
for a linear bounded operator  $A$:
\begin{equation*}
e^{tA}  = \text{Id} +t A + \frac{t^2 A^2}{2} +
 \frac{t^3 A^3}{6} 
+ \int_0^t \frac{(t-s)^3}{6} A^4  e^{sA} \, {\rm d} s.
\end{equation*}
\end{proof}
We extend now the previous theorem to our nonlinear framework given by
(\ref{sys_rea_dif_u}).
In order to do this,
we introduce the spaces
$C^{\infty}(\R)$ of functions of class $C^{\infty}$ on $\R$,
and
$C^{\infty}_{b}(\R)$ of functions of class $C^{\infty}$ on $\R$
and bounded over  $\R$.
%equipped with the norm $\| \cdot \|_{\infty}$.
We consider also  the Schwartz space $\mathcal{S}(\R)$ 
defined by 
\begin{equation*}
 \mathcal{S} \left(\mathbb{R}\right) = \{ g \in C^\infty(\mathbb{R})
  \mid 
  \sup_{v\in\mathbb{R}}|v^{\alpha_1} \partial_v^{\alpha_2} g(v)|
   < \infty\quad\,\textrm{for\,all\,integers}\,\alpha_1, \alpha_2 \};
  \end{equation*}
and we define the space $\mathcal{S}_1(\R)$, made out of functions
$v$ belonging to $C^{\infty}_{b}(\R)$ such that $v'$ belongs to $\mathcal{S}(\R)$.
Let us consider now equation (\ref{sys_rea_dif_u}) and give
the expansion in powers of $t$
of the difference between $T^t$ and $S^t_{\varepsilon}$, given by (\ref{strangdec}).
\begin{theorem}\label{theo1}
Assume that $u_0$ belongs to $\mathcal{S}_1 \left(\mathbb{R}\right) $
and that $f$ belongs to $C^{\infty}(\R)$.
For $t$ and $\varepsilon$ small enough, the following asymptotic holds
\begin{eqnarray}\label{estim1}
T^tu_0 - S^t_{\varepsilon}  u_0 &=& - \varepsilon t^2
f''(u_0) \left ( \frac{\partial u_0}{\partial x} \right )^2
\nonumber \\ 
&&+ \frac{t^3}{24}
  (  f'(u_0)f''(u_0)
+ f(u_0) f^{(3)}(u_0) ) 
 \left ( \frac{\partial u_0}{\partial x} \right )^2\nonumber \\ 
 &&- \frac{ t^3}{12}
f^{(4)}(u_0)  \left ( \frac{\partial u_0}{\partial x} \right )^4
- \frac{ t^3}{3}
f^{(3)}(u_0)  \left ( \frac{\partial u_0}{\partial x} \right )^2
 \frac{\partial^2 u_0}{\partial x^2}\nonumber \\
&&- \frac{ t^3}{6}  f''(u_0)  \left ( \frac{\partial^2 u_0}{\partial x^2} \right )^2
+\mathcal{O}(\varepsilon t^3)+\mathcal{O}(t^4).
  \end{eqnarray}
\end{theorem}
\begin{proof}
We introduce the two Lie operators 
$D_{\Delta}$ and $D_{f}$ associated with
$\partial^2_x$ and $f$ and write
\begin{equation*}
T^tu_0 - S^t_{\varepsilon}  u_0  =
\left ( e^{t (D_{\Delta}+D_{f})} \rm{Id} \right ) u_0- \left ( 
e^{(1/2+\varepsilon)tD_{f}}
e^{tD_{\Delta}}
e^{(1/2-\varepsilon)tD_{f}} 
 \rm{Id} \right )  u_0.
\end{equation*}
With Theorem \ref{linear} we can deduce that
\begin{eqnarray}\label{est_gen}
T^tu_0 - S^t_{\varepsilon}  u_0 &=&-  \varepsilon t^2
\left (
[D_{f},
D_{\Delta}] 
 \rm{Id} \right ) u_0+  \frac{ t^3}{24}
 \left (
 \big[D_{f}, [D_{f},D_{\Delta}] \big]  \rm{Id} \right ) u_0
 \nonumber\\ 
 &&+
  \frac{t^3}{12}
  \left (
 \big[D_{\Delta},[D_{f},D_{\Delta}]\big]  \rm{Id} \right ) u_0
 +\mathcal{O}(\varepsilon t^3)+\mathcal{O}(t^4).
  \end{eqnarray}
  We are not interested in giving the exact form of the terms $\mathcal{O}(\varepsilon t^3)$
  and $\mathcal{O}(t^4)$, but these terms
  can be computed following the same technique developed in \cite{DesThal}.
  For the term in $\mathcal{O}(t^2)$, we have by definition and with \eqref{LieOp},
  \begin{eqnarray*}
\left (
[D_{f},D_{\Delta}] \rm{Id} 
\right ) u_0 &=&
\left(
D_{f} ( D_{\Delta}  \mathrm{Id} )
- D_{\Delta} ( D_f  \rm{Id} )
\right)
 u_0
,\\ \nonumber
& = &
(D_{\Delta}  \mathrm{Id})'(u_0) f(u_0)
-
( D_{f}  \mathrm{Id})'(u_0) \frac{\partial^2 u_0 }{\partial x^2},\\ \nonumber
&= & \frac{\partial^2 }{\partial x^2} \left (f(u_0) \right )
-f'(u_0) \frac{\partial^2 u_0 }{\partial x^2}.
\end{eqnarray*}
The last term is by definition the Lie bracket between
$\partial^2_x$ and $f$,
a simple computation shows that 
\begin{eqnarray*}\label{commutLie}
\frac{\partial^2 f(u_0) }{\partial x^2}  - f'(u_0) \frac{\partial^2 u_0}{\partial x^2}
&=&  
f''(u_0) \left ( \frac{\partial u_0}{\partial x} \right )^2 
+f'(u_0) \frac{\partial^2 u_0}{\partial x^2}-f'(u_0) \frac{\partial^2 u_0}{\partial x^2},\\ \nonumber
&=& f''(u_0) \left ( \frac{\partial u_0}{\partial x} \right )^2.
\end{eqnarray*}
Furthermore,
\begin{equation*}
  \left (
 \big[D_{f}, [D_{f},D_{\Delta}]\big]  \rm{Id} \right ) (u_0) =
(  f'(u_0) f''(u_0)
+ f(u_0) f^{(3)}(u_0) ) \left ( \frac{\partial u_0}{\partial x} \right )^2
\end{equation*}
and
\begin{eqnarray*}
 \left (
 \big[D_{\Delta}, [D_{f},D_{\Delta}]\big]  \rm{Id} \right ) u_0
&=&
- f^{(4)}(u_0)  \left ( \frac{\partial u_0}{\partial x} \right )^4
-4 f^{(3)}(u_0)  \left ( \frac{\partial u_0}{\partial x} \right )^2
 \frac{\partial^2 u_0}{\partial x^2} \\
&&- 2  f''(u_0)  \left ( \frac{\partial^2 u_0}{\partial x^2} \right )^2.
\end{eqnarray*}
All the terms are now computed and 
this concludes the proof of Theorem \ref{theo1}.
\end{proof}
For $\varepsilon=0$, the next corollary follows directly.
\begin{corollary}
Assume that $u_0$ belongs to $\mathcal{S}_1 \left(\mathbb{R}\right) $
and that $f$ belongs to $C^{\infty}(\R)$.
For $t$ small enough, the following asymptotic holds
\begin{eqnarray}\label{estim2}
T^tu_0 - S^t u_0 &=&
 \frac{t^3}{24}
  (  f'(u_0)f''(u_0)
+ f(u_0) f^{(3)}(u_0) ) 
 \left ( \frac{\partial u_0}{\partial x} \right )^2\nonumber \\ 
 &&- \frac{ t^3}{12}
f^{(4)}(u_0)  \left ( \frac{\partial u_0}{\partial x} \right )^4
- \frac{ t^3}{3}
f^{(3)}(u_0)  \left ( \frac{\partial u_0}{\partial x} \right )^2
 \frac{\partial^2 u_0}{\partial x^2}\nonumber \\
&&- \frac{ t^3}{6}  f''(u_0)  \left ( \frac{\partial^2 u_0}{\partial x^2} \right )^2
+\mathcal{O}(t^4).  \end{eqnarray}
\end{corollary}
From (\ref{estim1}) and (\ref{estim2}), we can see that
\begin{equation}\label{s_seps}
S^t u_0  - S^t_{\varepsilon}  u_0 = \varepsilon t^2
f''(u_0) \left ( \frac{\partial u_0}{\partial x} \right )^2 + \mathcal{O}(\varepsilon t^3),
\end{equation}
and thus,
\begin{equation}\label{somme_err}
T^tu_0 - S^t_{\varepsilon}  u_0 = \underbrace{T^tu_0 - S^t u_0}_{\mathcal{O}(t^3)} 
+ \underbrace{S^t u_0  - S^t_{\varepsilon}  u_0}_{\mathcal{O}(\varepsilon t^2)}.
\end{equation}
Therefore,
we are sure that 
the real local error of the method, $T^tu_0 - S^t u_0$, will be bounded by
the local error estimate, $err=S^t u_0  - S^t_{\varepsilon}  u_0$,
when for a given $\varepsilon$,
\begin{equation}\label{order_cond}
 T^tu_0 - S^t_{\varepsilon}  u_0 \approx \mathcal{O}(t^2)
\end{equation}
is verified into (\ref{somme_err});
that is, 
when the embedded method is really of lower order
as it was assumed in (\ref{differ}).
This will be always verified for small enough time steps $t$,
for which $T^tu_0 - S^t u_0 \approx  \mathcal{O}(t^3) < err \approx  \mathcal{O}(\varepsilon t^2)$
is guaranteed.
Nevertheless, 
for larger time steps,
$err$ will fail to properly predict $T^tu_0 - S^t u_0$
since we will eventually have 
$T^tu_0 - S^t u_0 \approx  \mathcal{O}(t^3) > err \approx  \mathcal{O}(\varepsilon t^2)$.
When this happens, (\ref{order_cond})
is no longer true and the previous estimates show that we will rather have
$T^tu_0 - S^t_{\varepsilon}  u_0 \approx \mathcal{O}(t^3)$,
and assumption (\ref{differ}) will no longer hold.

In order to overcome this difficulty,
we must therefore estimate a critical time step
$t^\star >0$ 
such that
for all $t$ in $[0,t^*]$,
(\ref{order_cond}) is guaranteed for a given $\varepsilon$.
This will imply that Strang local error,
$T^tu_0 - S^t u_0$, will be indeed bounded by the local error
estimate, $err$,
and that 
an effective error control will be achieved for $err$
smaller than a given accuracy tolerance $\eta$.
Finally, a suitable choice of $\varepsilon$
can be also made since $t^\star$ is related to
$\varepsilon$ following (\ref{somme_err}).

A natural strategy to predict this critical 
$t^\star$ will rely on the
previous theoretical estimates
and on a more precise knowledge of the structure of the
solutions of the PDEs;
this is for instance illustrated in the next part in the context of
traveling wave solutions.

\section{Application to Reaction Traveling Waves}\label{KPP}
In this part,
we will confront the previous theoretical study to a simple reaction
diffusion problem that admits self-similar traveling wave solutions
such as the KPP equation \cite{KPP38}.
The main advantages of considering this kind of problems are that
analytic solutions exist and that the featured stiffness can be tuned
using a space-time scaling.
Therefore, it provides a first numerical validation
of the numerical estimates of the method and 
an evaluation
of its domain of application; and
on the other hand,
a detailed study can be conducted on the impact of the stiffness featured  
by propagating fronts with steep spatial gradients.

In what follows, we recast previous estimates in the context
of these reaction traveling waves, to
then deduce an estimate of the time step $t^\star$
that defines the limit of application of the method
for which local error estimates yield effective error control.
We end with a numerical validation
of the theoretical results in the context of the resolution of KPP model.

\subsection{Numerical estimates}
We are interested in the propagation of self-similar
waves modeled by parabolic PDEs of type: 
\begin{equation}\label{sys_rea_dif_wave}
\left.
\begin{array}{ll}
\partial_t u-D\, \partial^2_{x} u =kf(u),& \quad x\in \mathbb{R},\ t>0,\\[1ex]
u(0,x)=u_0(x),&\quad x\in \mathbb{R}, \ t=0,
\end{array}\right\}
\end{equation}
with solution $u(x,t)=u_0(x - ct)$,
where $c$ is the steady speed of the wavefront,
and $D$ and $k$ stand respectively for
diffusion and reaction coefficients.

Considering Theorem \ref{theo1}
we obtain the following estimate for system (\ref{sys_rea_dif_wave}).
\begin{corollary}
Assume that $u_0$ belongs to $\mathcal{S}_1 \left(\mathbb{R}\right) $
and that $f$ belongs to $C^{\infty}(\R)$.
For $t$ and $\varepsilon$ small enough, the following asymptotic holds
\begin{eqnarray}\label{estim1_kd}
T^tu_0 - S^t_{\varepsilon}  u_0 &=& - \varepsilon kD t^2
f''(u_0) \left ( \frac{\partial u_0}{\partial x} \right )^2
\nonumber \\ 
&&+ \frac{k^2 D t^3}{24}
  (  f'(u_0)f''(u_0)
+ f(u_0) f^{(3)}(u_0) ) 
 \left ( \frac{\partial u_0}{\partial x} \right )^2\nonumber \\ 
 &&- \frac{k D^2 t^3}{12}
f^{(4)}(u_0)  \left ( \frac{\partial u_0}{\partial x} \right )^4
- \frac{k D^2 t^3}{3}
f^{(3)}(u_0)  \left ( \frac{\partial u_0}{\partial x} \right )^2
 \frac{\partial^2 u_0}{\partial x^2}\nonumber \\
&&- \frac{ k D^2 t^3}{6}  f''(u_0)  \left ( \frac{\partial^2 u_0}{\partial x^2} \right )^2
+\mathcal{O}(\varepsilon t^3)+\mathcal{O}(t^4).
  \end{eqnarray}
  \end{corollary}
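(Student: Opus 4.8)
The plan is to obtain the statement by specialising the abstract expansion of Theorem~\ref{theo1} to the two vector fields attached to (\ref{sys_rea_dif_wave}), namely the diffusion operator $\mathcal{D}\,u := D\,\partial_x^2 u$ and the reaction operator $\mathcal{R}\,u := k f(u)$. First I would record the differential calculus of these two fields: $\mathcal{D}$ is linear, so its Fr\'echet derivative is $\mathcal{D}$ itself at every point, whereas the Fr\'echet derivatives of $\mathcal{R}$ are the substitution operators $\mathcal{R}'(u)\,v = k f'(u)\,v$, $\mathcal{R}''(u)[v,w] = k f''(u)\,v\,w$, and so on — all well defined because $f\in C^\infty(\R)$ and $u_0$ lies in the space $\mathcal{S}_1(\R)$ on which Theorem~\ref{theo1} is stated. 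I would then check that the hypotheses of that theorem are met here, the point being that the regularity encoded in the assumption $u_0\in\mathcal{S}_1(\R)$ is exactly what keeps each commutator below in the space on which the expansion and its remainder are controlled; with that in hand Theorem~\ref{theo1} applies verbatim and expresses $T^tu_0 - S^t_\varepsilon u_0$ as a fixed linear combination of the first commutator $[\mathcal{D},\mathcal{R}]\,u_0$ with a coefficient of size $\varepsilon t^2$, together with the two nested commutators $[\mathcal{D},[\mathcal{D},\mathcal{R}]]\,u_0$ and $[\mathcal{R},[\mathcal{D},\mathcal{R}]]\,u_0$ with coefficients of size $t^3$, modulo a remainder $\mathcal{O}(\varepsilon t^3)+\mathcal{O}(t^4)$.

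The heart of the proof is then the explicit computation of these three commutators. For the first one, Leibniz' rule $\partial_x^2 f(u) = f''(u)(\partial_x u)^2 + f'(u)\,\partial_x^2 u$ gives immediately $[\mathcal{D},\mathcal{R}]\,u = \mathcal{D}\mathcal{R}u - \mathcal{R}'(u)\mathcal{D}u = kD\bigl(\partial_x^2 f(u) - f'(u)\,\partial_x^2 u\bigr) = kD\,f''(u)(\partial_x u)^2$, which already is the $\varepsilon t^2$ term. For $[\mathcal{R},[\mathcal{D},\mathcal{R}]]\,u$ one applies $\mathcal{R}'(u)$ to $kD f''(u)(\partial_x u)^2$ and subtracts the Fr\'echet derivative of $u\mapsto kDf''(u)(\partial_x u)^2$ evaluated at the increment $\mathcal{R}u = kf(u)$; after cancellation of one $f'(u)f''(u)(\partial_x u)^2$ contribution this collapses to $\pm k^2D\bigl(f'(u)f''(u)+f(u)f^{(3)}(u)\bigr)(\partial_x u)^2$, i.e.\ the $k^2D t^3$ line of (\ref{estim1_kd}). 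For $[\mathcal{D},[\mathcal{D},\mathcal{R}]]\,u$ the same bracket is taken with $\mathcal{D}$ in place of $\mathcal{R}$: one applies $D\partial_x^2$ to $kDf''(u)(\partial_x u)^2$ and subtracts the Fr\'echet derivative of $u\mapsto kDf''(u)(\partial_x u)^2$ at the increment $\mathcal{D}u = D\partial_x^2 u$; the two $f''(u)(\partial_x u)(\partial_x^3 u)$ contributions cancel, leaving $\pm kD^2\bigl(f^{(4)}(u)(\partial_x u)^4 + 4 f^{(3)}(u)(\partial_x u)^2\,\partial_x^2 u + 2 f''(u)(\partial_x^2 u)^2\bigr)$, which after distribution of the numerical prefactor yields the three $kD^2 t^3$ terms. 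Setting $u = u_0$, substituting these expressions into the combination furnished by Theorem~\ref{theo1}, and absorbing everything else into $\mathcal{O}(\varepsilon t^3)+\mathcal{O}(t^4)$ completes the argument.

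The genuinely delicate points are bookkeeping ones. One must propagate signs and the rational prefactors coming out of Theorem~\ref{theo1} consistently with the sign convention adopted there for the bracket of such operators ($\mathcal{D}$ being unbounded and $\mathcal{R}$ nonlinear), since the two order-$t^3$ contributions carry different coefficients — namely $1/12$ and $1/24$ — and those must survive the specialisation intact. The chain- and product-rule expansions in the two nested commutators each produce several monomials in the $f^{(j)}(u)$ and the $\partial_x^\ell u$, so one has to carry them all and notice the cancellations of the top-order derivatives — the $\partial_x^3 u_0$ terms in particular — which is precisely what leaves the final formula expressed through $\partial_x u_0$ and $\partial_x^2 u_0$ only; this is routine but the most error-prone step. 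Finally one should verify that, once specialised, the remainders of Theorem~\ref{theo1} are indeed uniformly $\mathcal{O}(\varepsilon t^3)$ and $\mathcal{O}(t^4)$ on $\mathcal{S}_1(\R)$, i.e.\ that no constant hidden in those symbols degrades with the stiffness through $D$ or $k$ — which, again, is what the choice of $\mathcal{S}_1(\R)$ is meant to ensure.
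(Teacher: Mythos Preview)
Your approach is correct and essentially the same as the paper's: both specialise Theorem~\ref{theo1} to the pair $(D\partial_x^2,\,kf)$ and identify the three commutators that feed the $\varepsilon t^2$ and $t^3$ terms. The only difference is economy: the paper does not recompute the brackets from scratch but observes that the Lie operators depend linearly on the vector field, so $[D_{kf},D_{D\Delta}]=kD\,[D_f,D_\Delta]$, $\bigl[[D_{kf},D_{D\Delta}],D_{D\Delta}\bigr]=kD^2\bigl[[D_f,D_\Delta],D_\Delta\bigr]$ and $\bigl[[D_{kf},D_{D\Delta}],D_{kf}\bigr]=k^2D\bigl[[D_f,D_\Delta],D_f\bigr]$, and then simply reads off the explicit $k=D=1$ expressions already obtained in the proof of Theorem~\ref{theo1} (equation~(\ref{est_gen})).
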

\begin{proof}
Proof follows directly from demonstration of Theorem \ref{theo1},
using (\ref{est_gen}) and  
considering that
\begin{eqnarray*}
[D_{k f},D_{D \Delta}] & = & kD[D_{f},D_{\Delta}], \\
\big[[D_{k f},D_{D \Delta}],D_{D \Delta}\big] & = & kD^2\big[[D_{f},D_{\Delta}],D_{\Delta}\big], \\
\big[[D_{k f},D_{D \Delta}],D_{k f}\big] & = & k^2D\big[[D_{f},D_{\Delta}],D_{f}\big],
\end{eqnarray*}
where $D_{D \Delta}$ and $D_{k f}$ are the Lie operators associated with
$D \partial^2_x$ and $k f$.
\end{proof}

On the other hand, 
if we now consider system (\ref{sys_rea_dif_wave})
with $k=1$ and $D=1$,
the 
following corollary establishes
$t^* >0$ 
such that
for all $t$ in $[0,t^*]$
(\ref{order_cond}) is guaranteed for a given $\varepsilon$.
\begin{corollary}\label{corostar}
Assume that $u_0$ belongs to $\mathcal{S}_1 \left(\mathbb{R}\right) $
and that $f$ belongs to $C^{\infty}(\R)$.
For a given $\varepsilon$ small enough , define
\begin{equation}\label{Mone}
M_1 = \left \|   f''(u_0)  \left ( \frac{\partial u_0}{\partial x} \right )^2  \right \|_{L^2}
\end{equation}
and
\begin{eqnarray}\label{Mtwo}
M_2
&=
\Biggl \|  &
 \frac{ f'(u_0)f''(u_0)
+ f(u_0) f^{(3)}(u_0) }{24}
 \left ( \frac{\partial u_0}{\partial x} \right )^2
 - \frac{f^{(4)}(u_0)}{12}
 \left ( \frac{\partial u_0}{\partial x} \right )^4
 \nonumber \\
 &&- 
\frac{f^{(3)}(u_0)}{3}
  \left ( \frac{\partial u_0}{\partial x} \right )^2
 \frac{\partial^2 u_0}{\partial x^2}- \frac{f''(u_0)}{6}   \left ( \frac{\partial^2 u_0}{\partial x^2} \right )^2  \Biggr \|_{L^2},
  \end{eqnarray}
  define $ t^\star$ by
\begin{equation}\label{tstar}
t^\star M_2 =  \varepsilon M_1.
\end{equation}
For all $t$ such that 
   $0<t\leq t^\star$ then 
$$
\|  T^{ t} u_0 - S^{ t} _{\varepsilon}  u_0 \|_{L^2} \approx \mathcal{O}(t^2).
$$
\end{corollary}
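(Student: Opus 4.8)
The plan is to read the asymptotic expansion of the preceding corollary in the $L^2(\mathbb{R})$ norm and to balance its first two contributions against one another. Setting $k=D=1$ in (\ref{estim1_kd}), the splitting error reads
\[
T^t u_0 - S^t_\varepsilon u_0 = -\,\varepsilon\, t^2\, f''(u_0)\left(\frac{\partial u_0}{\partial x}\right)^{2} + t^3\, R(u_0) + \mathcal{O}(\varepsilon t^3) + \mathcal{O}(t^4),
\]
where $R(u_0)$ collects the $t^3$ coefficients, so that $\| R(u_0) \|_{L^2}=M_2$ by (\ref{Mtwo}), while the leading coefficient $f''(u_0)\left(\partial u_0/\partial x\right)^{2}$ has $L^2$ norm $M_1$ by (\ref{Mone}). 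The hypotheses $u_0\in\mathcal{S}_1(\mathbb{R})$ and $f\in C^\infty(\R)$ guarantee, exactly as in Theorem~\ref{theo1}, that $M_1$ and $M_2$ are finite and that the expansion holds in $L^2(\mathbb{R})$ for $t$ and $\varepsilon$ small enough.

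First I would apply the triangle inequality, which gives
\[
\| T^t u_0 - S^t_\varepsilon u_0 \|_{L^2} \leq \varepsilon\, t^2 M_1 + t^3 M_2 + \mathcal{O}(\varepsilon t^3) + \mathcal{O}(t^4).
\]
The key step is then to invoke the definition (\ref{tstar}) of $t^\star$, namely $t^\star M_2 = \varepsilon M_1$: for every $t$ with $0<t\leq t^\star$ one has $t^3 M_2 = t^2\,(t M_2) \leq t^2\,(t^\star M_2) = \varepsilon\, t^2 M_1$, so the cubic term is absorbed by the quadratic one and
\[
\| T^t u_0 - S^t_\varepsilon u_0 \|_{L^2} \leq 2\,\varepsilon\, t^2 M_1 + \mathcal{O}(\varepsilon t^3) + \mathcal{O}(t^4).
\]
Since $\varepsilon$ is a fixed small parameter and the remaining terms are of order $t^3$ or higher, the right-hand side is $\mathcal{O}(t^2)$, which is precisely the order condition (\ref{order_cond}) and yields the announced estimate. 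For $t<t^\star$ strictly, the reverse triangle inequality moreover gives $\| T^t u_0 - S^t_\varepsilon u_0 \|_{L^2} \geq (\varepsilon M_1 - t M_2)\, t^2 + \mathcal{O}(t^3) > 0$ for $t$ small, so that the error is genuinely of order $t^2$ and not smaller, which I take to be the content of the symbol $\approx$.

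The only point that I expect to need care is the consistent treatment of the two small parameters $t$ and $\varepsilon$: the expansion (\ref{estim1_kd}) is valid only when both are small, whereas the conclusion is an asymptotic in $t$ with $\varepsilon$ frozen. This causes no difficulty, because $t^\star = \varepsilon M_1/M_2 \to 0$ as $\varepsilon \to 0$ (in the generic situation $M_1,M_2\neq 0$; otherwise the range of $t$ degenerates and the statement is empty), so restricting to $0<t\leq t^\star$ automatically keeps $t$ within the regime where the expansion applies, and there the remainders $\mathcal{O}(\varepsilon t^3)$ and $\mathcal{O}(t^4)$ are dominated by $\varepsilon\, t^2 M_1$. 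No estimate beyond the one already established for Theorem~\ref{theo1} is required.
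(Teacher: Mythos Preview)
Your proof is correct and is exactly the argument the paper intends: the corollary is stated without an explicit proof in the paper, because it follows immediately from the expansion~(\ref{estim1_kd}) of the preceding corollary (with $k=D=1$) by taking $L^2$ norms and balancing the $\varepsilon t^2$ term against the $t^3$ term via the definition~(\ref{tstar}) of $t^\star$. Your added remarks on the lower bound via the reverse triangle inequality and on the joint smallness of $t$ and $\varepsilon$ are sound and make the implicit reasoning fully explicit.
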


In a general case,
if
evaluation of the derivatives of $u_0$ and $f$ is feasible,
it is then possible to predict the domain of application of the method,
$[0,t^\star]$, for a given $\varepsilon$
based on the previous result.
In the particular case of traveling wave solutions for (\ref{sys_rea_dif_wave}),
diffusion and reaction coefficients, $D$ and $k$, might be seen as 
scaling coefficients in time and space.
A dimensionless analysis of a traveling wave,
as shown in \cite{Scott94}, can be then conducted
considering a dimensionless time $\tau$ and a dimensionless space $r$ with
 $$\tau = kt \quad \text{and} \quad r=(k/D)^{1/2}x.$$
This analysis allows to find
a steady velocity of the wavefront, 
\begin{equation}\label{wave_speed}
\displaystyle c=x_t \propto (Dk)^{1/2},
\end{equation}
whereas
the sharpness of the wave profile is measured
by 
\begin{equation}\label{wave_grad}
\displaystyle \left.{u}_{x}\right|_{\max} \propto (k/D)^{1/2}.
\end{equation}
Therefore, 
condition $Dk=1$ implies constant velocity for all $k=1/D$
but 
greater $k$ (or 
smaller $D$) implies higher spatial gradients, and thus, stiffer configurations.

This study gives complementary information on the
solution of (\ref{sys_rea_dif_wave})
and in particular, when condition 
$Dk=1$ is satisfied,
it allows to deduce from Corollary \ref{corostar}:
 \begin{equation}\label{tstar2}
 kt^\star M_2 =  \varepsilon M_1,
 \end{equation}
with $M_1$ and $M_2$ given by (\ref{Mone}) and (\ref{Mtwo}).
Therefore, stiffer configurations given by the presence of steeper spatial gradients
will restrain the application domain of the method, according to (\ref{tstar2}).
Nevertheless, for larger gradients,
smaller time steps are also required
for a given
level of accuracy
and hence, we can expect a simultaneous reduction of both critical and accurate
splitting time steps.

\subsection{Numerical illustration: KPP equation}\label{KPP_res}
Let us recall the Kolmogorov-Petrovskii-Piskunov
model. 
In their original paper \cite{KPP38}, these authors 
introduced a model describing the propagation of a virus and 
the first rigorous analysis of a stable traveling wave solution of a
nonlinear reaction-diffusion 
equation \cite{Scott94}.
The equation is the following:
\begin{equation}\label{eq:kpp}
\partial_t u - D\,\partial^2_x u = k\,u^2(1-u),
\end{equation}
with homogeneous Neumann boundary conditions.
We consider a
1D discretization with $5001$ points on a $[-70,70]$ region
for which we have negligible spatial discretization errors with respect to
the ones coming from the numerical time integration.

The description of the dimensionless model and the structure of the exact solution 
can be found in \cite{Scott94}
where the dimensionless analysis shows that 
in the case of  $D=1$ and $k=1$, 
the velocity of the self-similar traveling wave is
$c=1/\sqrt{2}$ and the maximal gradient value reaches $1/\sqrt{32}$. 
The key point of this illustration is  
that the velocity of the traveling wave is proportional to 
$(k\,D)^{1/2}$, whereas the maximal gradient is proportional to 
$(k/D)^{1/2}$. 
Hence, we consider the case $kD=1$ for which one may obtain steeper
gradients for the same speed of propagation.

Throughout all this paper, exact solution
$T^t u_0$ will be approximated by 
the resolution of the 
coupled reaction-diffusion problem
performed by the Radau5 method \cite{Hairer96} with fine tolerances,
$\eta_{Radau5}=10^{-10}$.
This solution will be referred as the reference or {\it quasi-exact} solution.
Strang approximations 
$S^t u_0$ and $S^t_\varepsilon u_0$
will be computed with a splitting technique
recently introduced \cite{article_mr,article_avc},
which considers
Radau5 \cite{Hairer96}
to solve locally point by point
the reaction term;
and the ROCK4 method \cite{Abdulle02}
for the diffusion problem.
Radau5 \cite{Hairer96}
is a fifth order implicit Runge-Kutta method 
exhibiting \textit{A}- and 
\textit{L}-stability
properties to efficiently solve stiff systems of ODEs,
whereas ROCK4 \cite{Abdulle02} is formally
a fourth order
\textit {stabilized} explicit Runge-Kutta method
with extended
stability domain along the negative real axis,
well suited to numerically treat mildly stiff elliptic operators.
Both methods implement adaptive time stepping techniques
to guarantee computations within a prescribed
accuracy tolerance.
In order to properly discriminate the previously estimated splitting errors 
from those coming from temporal integration of the substeps, we consider
also fine tolerances, 
$\eta_{\rm Radau5}=\eta_{\rm ROCK4}=10^{-10}$.
\begin{figure}[!htb]
 \begin{center}
 \includegraphics[width=0.45\hsize]{./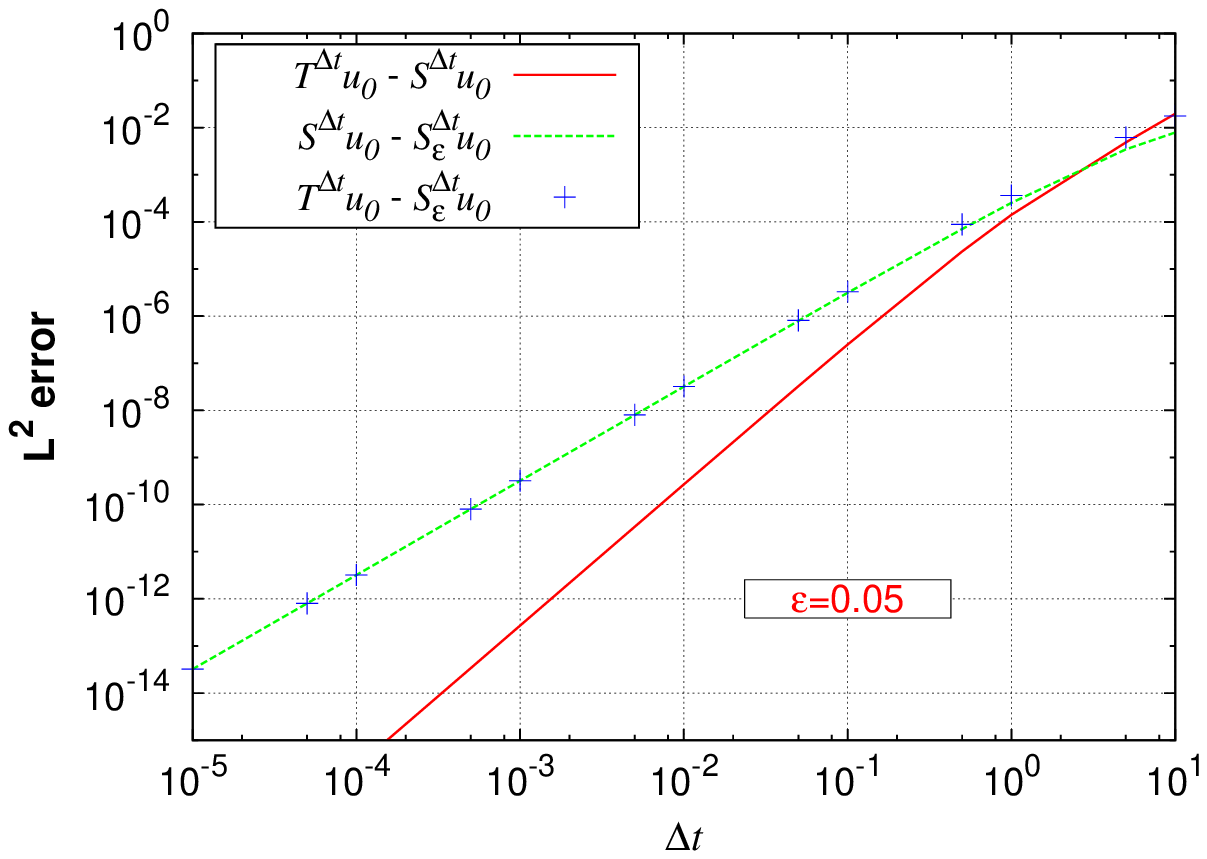}
 \includegraphics[width=0.45\hsize]{./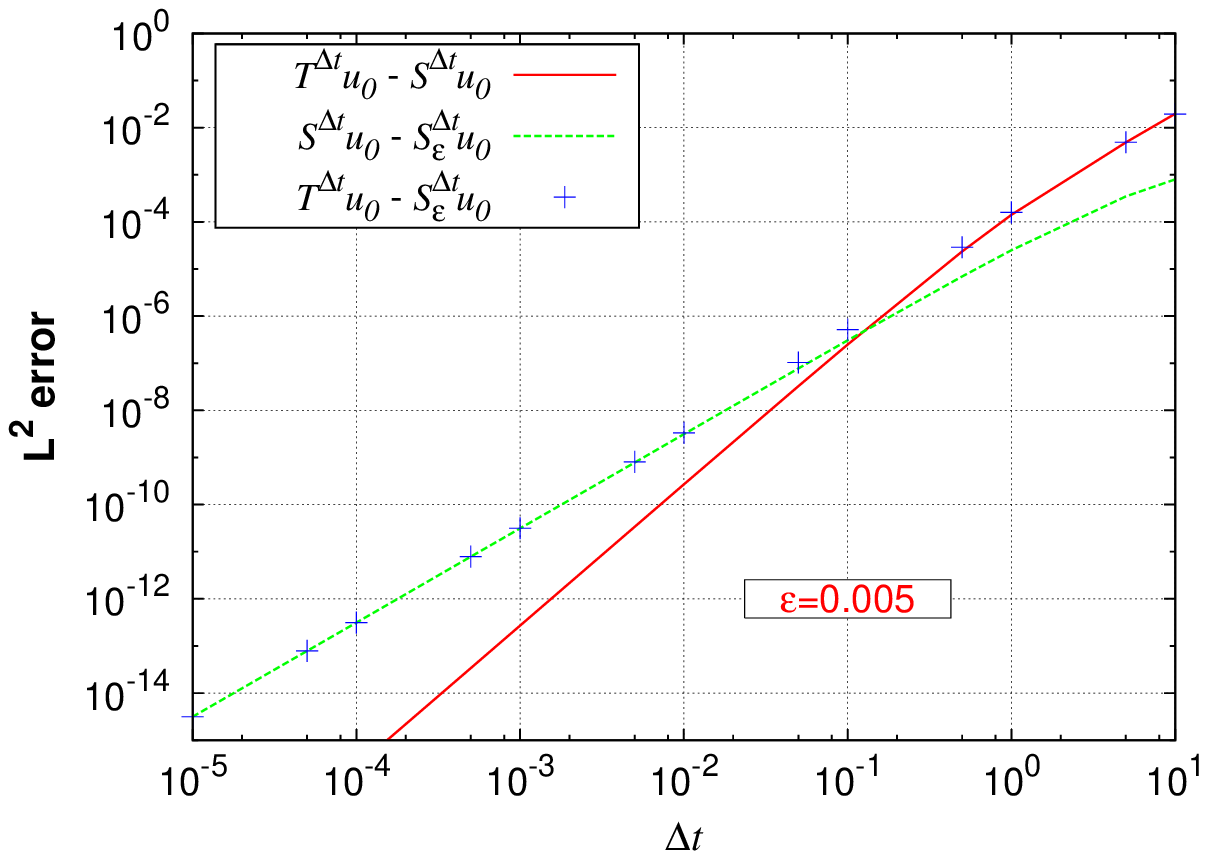}  
 \includegraphics[width=0.45\hsize]{./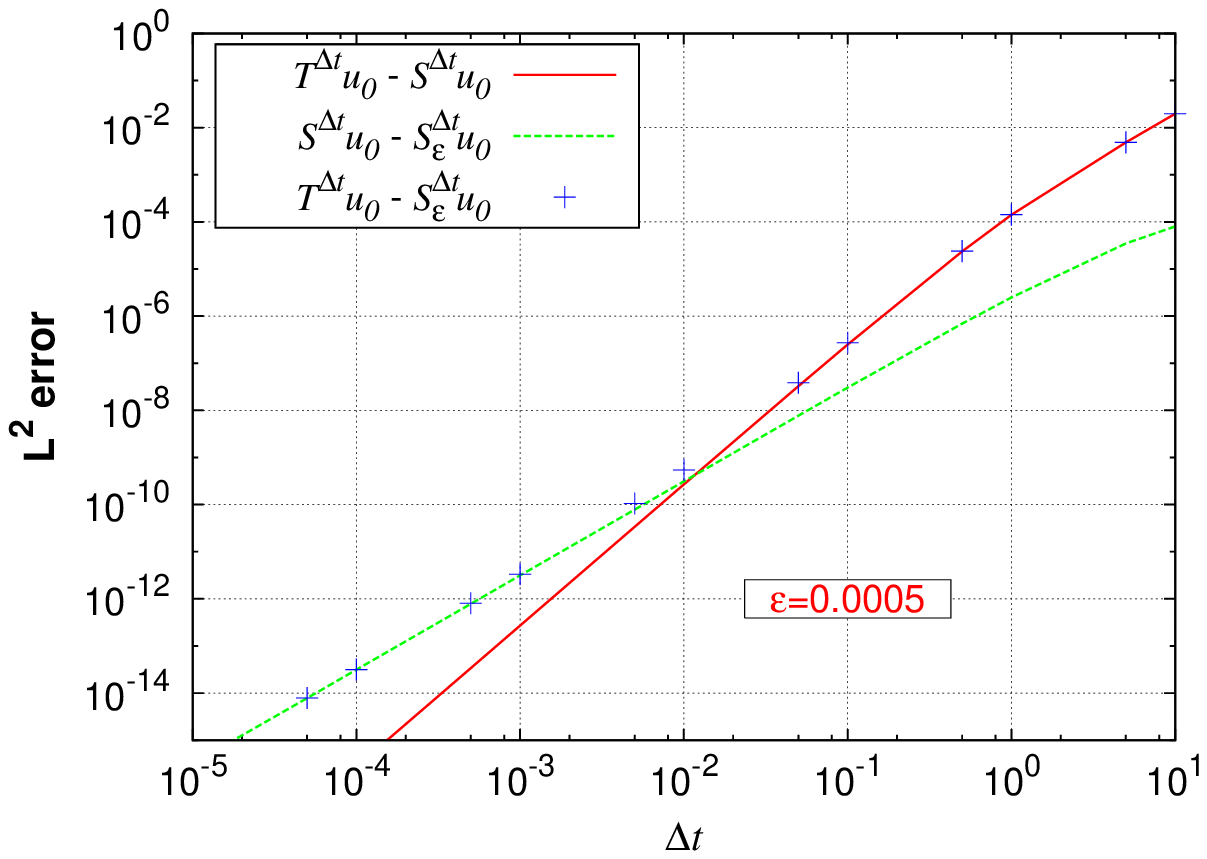}
\includegraphics[width=0.45\hsize]{./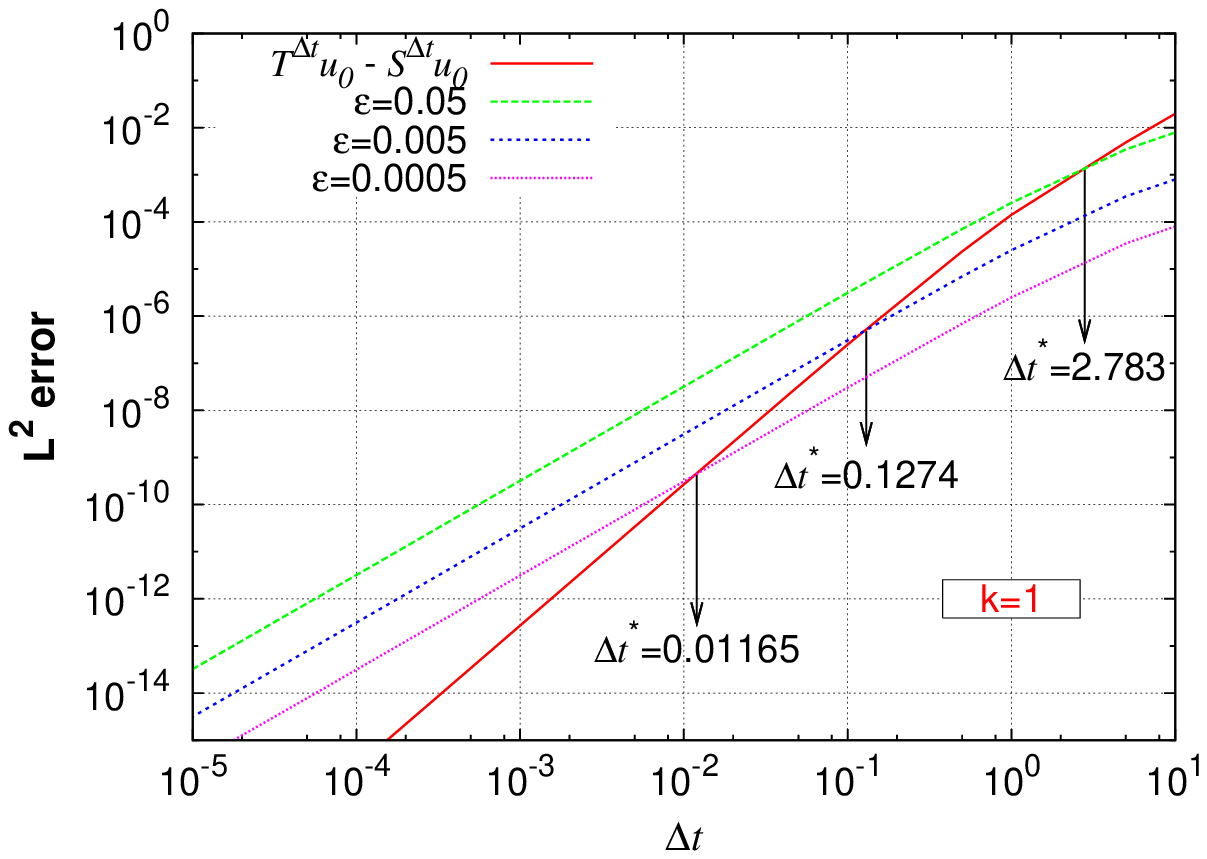} 
   \caption{KPP equation with $k=1$.
            Local $L^2$ errors for several splitting time steps $\Delta t$ and
$\varepsilon=0.05$ (top left), $0.005$ (top right) and $0.0005$ (bottom left).
 Bottom right: critical splitting time steps $\Delta t^\star$
obtained when $\|T^{\Delta t}u_0-S^{\Delta t}u_0\|_{L^2} \approx \|S^{\Delta t}u_0-S^{\Delta t}_{\varepsilon}u_0\|_{L^2}$
in the numerical tests.
 \label{kpp_error}}
 \end{center}
\end{figure}

Figures \ref{kpp_error} and \ref{kppk10_error}
show $L^2$ errors between $T^t u_0$, $S^t u_0$ and $S^t_\varepsilon u_0$
solutions for $k=1$, $k=10$ and $k=100$ respectively, and several $\varepsilon$.
Notice that 
estimates (\ref{estim1}), (\ref{estim2}) and (\ref{s_seps})
for all three errors in (\ref{somme_err})
are verified and in particular,
for $\Delta t$ larger than critical $\Delta t^\star$, the estimated error 
$err=\|S^{\Delta t}u_0-S^{\Delta t}_{\varepsilon}u_0\|_{L^2}$
is no longer predicting the real local error given by $T^tu_0 - S^t u_0$.
\begin{figure}[!htp]
 \begin{center}
 \includegraphics[width=0.45\hsize]{./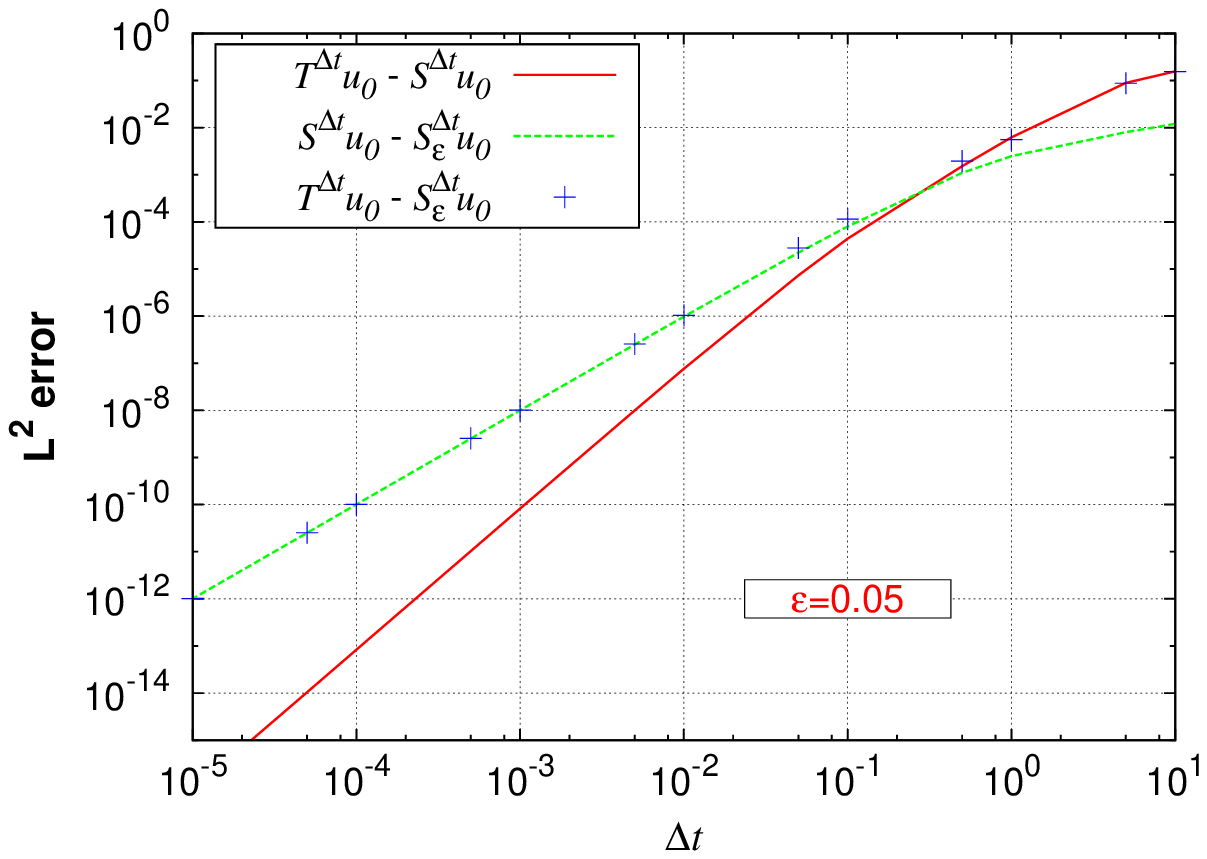}
 \includegraphics[width=0.45\hsize]{./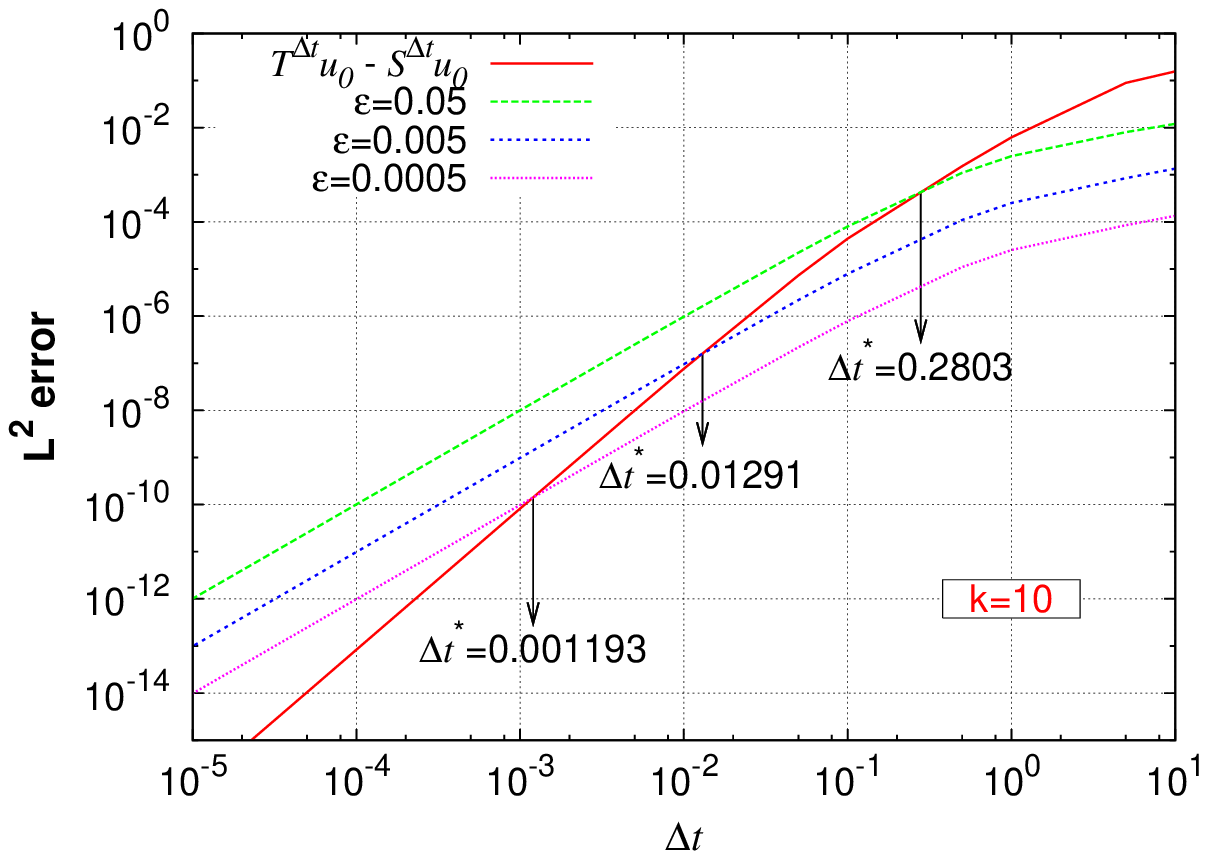}  
 \includegraphics[width=0.45\hsize]{./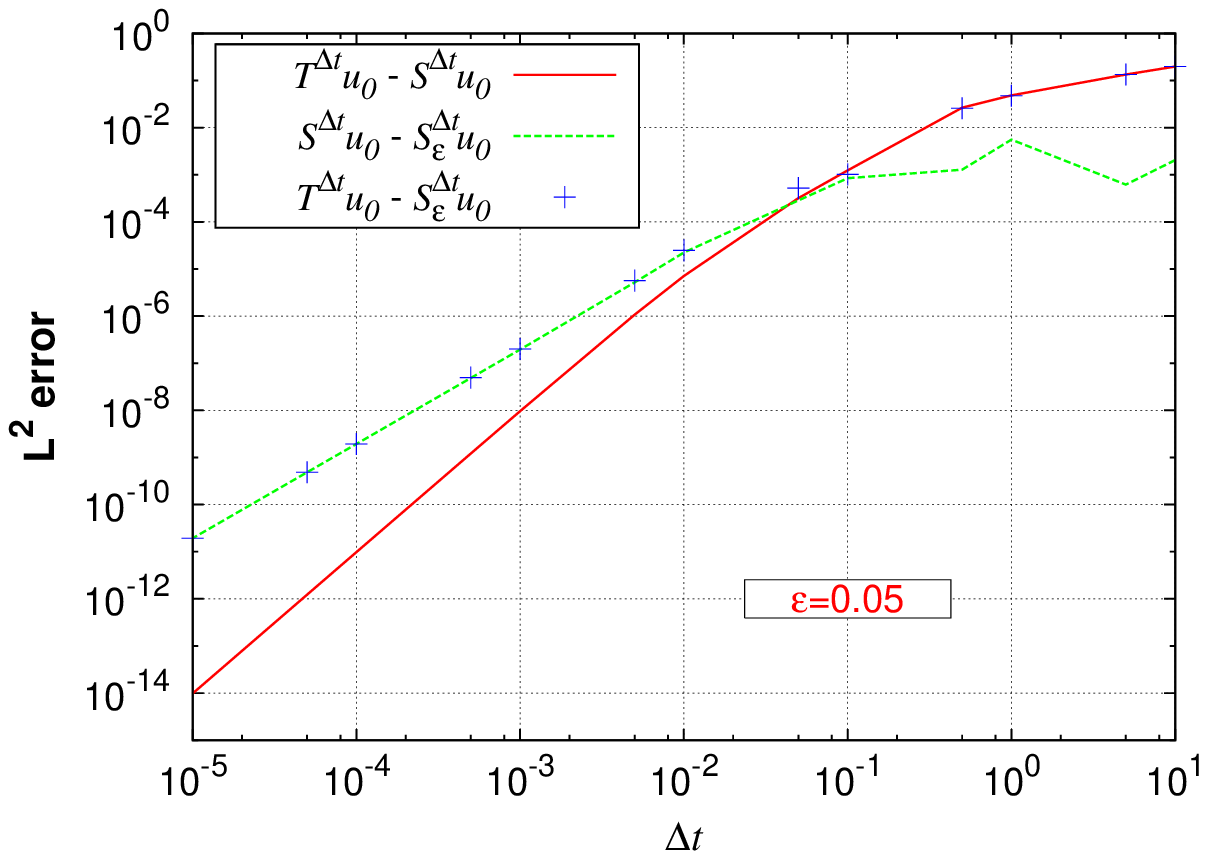}
 \includegraphics[width=0.45\hsize]{./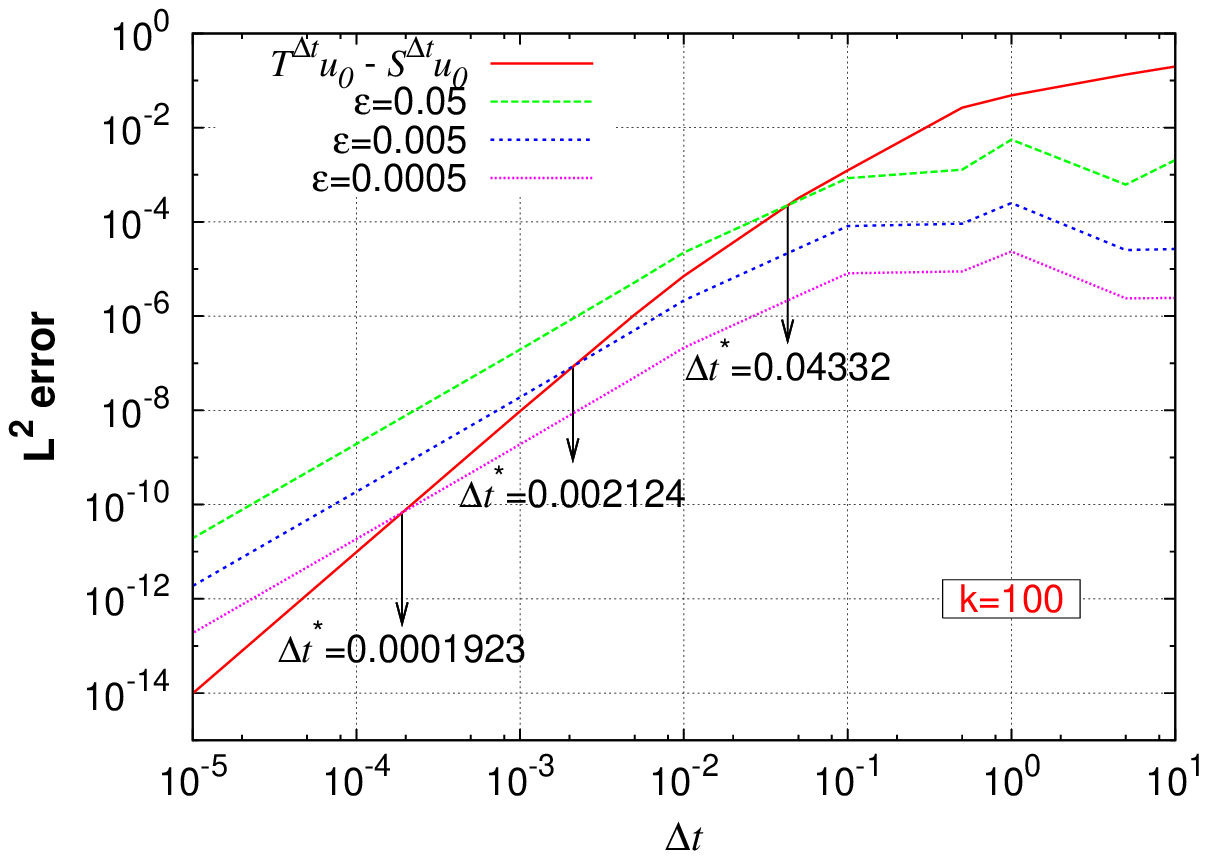} 
   \caption{KPP equation with $k=10$ (top) and $k=100$ (bottom).
            Local $L^2$ errors for several splitting time steps $\Delta t$ and
$\varepsilon=0.05$ (left).
Right: critical splitting time steps $\Delta t^\star$
obtained when $\|T^{\Delta t}u_0-S^{\Delta t}u_0\|_{L^2} \approx \|S^{\Delta t}u_0-S^{\Delta t}_{\varepsilon}u_0\|_{L^2}$
in the numerical tests.
 \label{kppk10_error}}
 \end{center}
\end{figure}

With these results, 
we can also compare real $\Delta t^\star$,
obtained when
$\|T^{\Delta t}u_0-S^{\Delta t}u_0\|_{L^2} \approx \|S^{\Delta t}u_0-S^{\Delta t}_{\varepsilon}u_0\|_{L^2}$
in the numerical tests,
with theoretically estimated $\Delta t^\star$ following (\ref{tstar2}).
Table \ref{kpp_delta_star} summarizes these results
where computation of estimated $\Delta t^\star$ in (\ref{tstar2})
is given by the computation of $M_1$ and $M_2$ with Maple$^\copyright$
according to (\ref{Mone}) and (\ref{Mtwo}).
A really good agreement can be observed even though theoretical results 
underestimate the real values.
The loss of order 
depicted by the numerical results,
is due to the influence of spatial
gradients in the solution,
as it was proven
in \cite{Descombes07}.
This explains the error of the predicted critical $\Delta t^\star$
in (\ref{tstar2})
whenever one gets close to the order loss region.

Numerical results show also that
$\|S^{\Delta t}u_0-S^{\Delta t}_{\varepsilon}u_0\|_{L^2}\propto \varepsilon$
according to (\ref{s_seps}) and consequently,
$\Delta t^\star \propto \varepsilon$;
therefore, the working region or domain of application of the method,
$\Delta t < \Delta t^\star$,
depends directly on the choice of $\varepsilon$
as it can be seen in Table \ref{kpp_delta_star}.
Finally,
in the context of traveling waves,
these numerical experiments show that
$\Delta t^\star \propto k^{-1} \propto 1/\| \partial u_0/\partial x \|_\infty$
according to Table \ref{kpp_delta_star};
hence,
application domains are reduced for stiffer configurations
but numerical results show also that smaller time steps are required
for the same level of accuracy.
These conclusions are easily extrapolated to more general self-similar propagating
waves.
\begin{table}[!htb]
\caption{KPP equation. 
Comparison between 
real $\Delta t^\star_{\mathrm{real}}$,
obtained when
$\|T^{\Delta t}u_0-S^{\Delta t}u_0\|_{L^2} \approx \|S^{\Delta t}u_0-S^{\Delta t}_{\varepsilon}u_0\|_{L^2}$
in the numerical tests,
and theoretically estimated $\Delta t^\star_{\mathrm{est}}$ following (\ref{tstar2}).}
\label{kpp_delta_star}
\begin{center}
\begin{tabular}{ll|ccc}
\hline\noalign{\smallskip}
& & $\varepsilon = 0.05 $& $\varepsilon = 0.005 $& $\varepsilon = 0.0005 $ \\ 
\noalign{\smallskip}
\hline
\noalign{\smallskip}
$k=1$ & $\Delta t^\star_{\mathrm{real}}$ & $2.783$ & $0.1274$ & $1.17\times10^{-2}$ \\
      & $\Delta t^\star_{\mathrm{est}}$ & $1.107$ & $0.1107$ & $1.11\times10^{-2}$ \\
\hline
$k=10$ & $\Delta t^\star_{\mathrm{real}}$ & $0.2803$ & $1.29\times10^{-2}$ & $1.19\times10^{-3}$ \\
      & $\Delta t^\star_{\mathrm{est}}$ & $0.1107$ & $1.11\times10^{-2}$ & $1.11\times10^{-3}$ \\
\hline
$k=100$ & $\Delta t^\star_{\mathrm{real}}$ & $4.33\times10^{-2}$ & $2.12\times10^{-3}$ & $1.92\times10^{-4}$ \\
      & $\Delta t^\star_{\mathrm{est}}$ & $1.11\times10^{-2}$ & $1.11\times10^{-3}$ & $1.11\times10^{-4}$ \\
\end{tabular}
\end{center}
\end{table}

\FloatBarrier 

\section{Construction of the Numerical Strategy}\label{Strategy}
We have presented 
in $\S~$\ref{AdaptSec},
a time adaptive numerical scheme 
fully based on theoretical error estimates
developed in $\S~$\ref{analyse}. We have also studied
the necessary general conditions in order to guarantee an
effective error control based on local error estimates.
In particular,
this has been  shown in the case of reaction traveling waves 
in $\S~$\ref{KPP},
for which
theoretical studies give us some insight into the PDE solution.
Nevertheless, this is not always possible and it is usually 
difficult to carry out such kind of analysis for more realistic models.
Therefore, 
based on the theoretical analysis and
previous illustrations on the influence of the various
parameters of the scheme,
a general numerical procedure 
that completes
the adaptive scheme defined in $\S~$\ref{AdaptSec},
is introduced in the following.

In a first part, we will settle the theoretical framework 
and the numerical procedure needed to estimate $t^\star$,
and to define the appropriate $\varepsilon$.
This will be illustrated by numerical tests performed
on a more complex model of time-space stiff propagating
waves.
These theoretical and numerical studies will
allow to define, at the end, a final numerical strategy.

\subsection{Numerical procedure to estimate critical $t^\star$ and $\varepsilon$}\label{tstar_eps}
Let us consider general system (\ref{sys_rea_dif_u}),
based on theoretical estimates (\ref{estim2}) and (\ref{s_seps}),
we can write 
\begin{equation}\label{error_C0}
S^{\Delta t}u_0-T^{\Delta t}u_0
= C_0 \Delta t^3,
\end{equation}
where $C_0=C_1(u_0)+\mathcal{O}( \Delta t^4)$, and
\begin{equation}\label{error_C_eps}
S^{\Delta t}u_0-S^{\Delta t}_{\varepsilon}u_0
= \varepsilon C_{\varepsilon} \Delta t^2,
\end{equation}
where $C_{\varepsilon}=C_2(u_0)+\mathcal{O}(\varepsilon,\Delta t^3)$;
the dependence of $C_{\varepsilon}$ on $\varepsilon$ is only given in the higher order terms
and it is thus neglected.

For a given $\varepsilon$,
in the same spirit as Corollary \ref{corostar},
 we search for a critical $\Delta t^\star$ such that
\begin{equation}\label{cond_err}
\left\|S^{\Delta t}u_0-T^{\Delta t}u_0\right\|
\leq
\left\|S^{\Delta t}u_0-S^{\Delta t}_{\varepsilon}u_0\right\|
\end{equation}
for all $\Delta t \leq \Delta t^\star$.
According to (\ref{error_C0}) and (\ref{error_C_eps}), we have then the following estimate:
\begin{equation}\label{dt_star}
\Delta t^\star \approx \displaystyle \frac{\varepsilon C_{\varepsilon}}{C_0}.
\end{equation}
For a given $\varepsilon$,
this gives an upper bound for the time steps for which the local error estimate, 
$err=\|S^{\Delta t}u_0-S^{\Delta t}_{\varepsilon}u_0\|$,
is properly estimating
the real Strang local error,
$\left\|S^{\Delta t}u_0-T^{\Delta t}u_0\right\|$,
following (\ref{cond_err}).

In particular, when 
$\Delta t \to \Delta t^\star$, 
we have that 
$err 
\approx
\left\|S^{\Delta t}u_0-T^{\Delta t}u_0\right\|$,
and the local error estimate is predicting more accurately
the real error of integration.
The critical time step, 
$\Delta t^\star$, is directly related to $\varepsilon$
through (\ref{dt_star})
as we have already shown in the previous numerical results
in $\S~$\ref{KPP_res}.
Therefore, 
a suitable
$\varepsilon$ 
will define a critical 
$\Delta t^\star$
such that 
the estimated splitting time steps
$\Delta t$ for a given tolerance $\eta$
will be close enough to critical $\Delta t^\star$,
in order to avoid an excessive overestimation
of the Strang local error and thus,
larger 
time steps can be chosen for a given accuracy tolerance $\eta$.

In order to compute $\Delta t^\star$
for a given $\varepsilon$, we must first 
estimate $C_0$ in (\ref{dt_star}), since $C_\varepsilon$ is computed out of 
the local error estimate, $err$, for known $\Delta t$ and $\varepsilon$ in (\ref{error_C_eps}).
Estimating $C_0$ amounts to directly estimate Strang local error through (\ref{error_C0})
and thus, the accuracy of the simulation might be controlled in this way without
relying on a local error estimate as proposed in the embedded method strategy
in $\S~$\ref{AdaptSec}.
Nevertheless, as we will see in the following,
in order to estimate 
$C_0$ and the Strang local error,
we must define new local estimators and a numerical procedure
that becomes rapidly very expensive if we want to implement
such error control technique.
Therefore, we must rely on a local error estimate given by
a less expensive strategy for which the computation of 
$C_0$ is only performed from time to time
to guarantee the validity of local error estimates. 

The next Lemma will be useful to define the numerical procedure
to estimate $C_0$.
\begin{lemma}\label{lemma}
Let us consider system (\ref{sys_rea_dif_u})
and assume
a local Lipschitz condition for $f$:
\begin{equation}\label{error_Lipf}
\left\|f(u)-f(v)\right\|
\leq \lambda
\left\|u-v\right\|.
\end{equation}
For a finite $\Delta t$ the following holds
\begin{equation}\label{error_Lip}
\left\|T^{\Delta t}u_0-T^{\Delta t}v_0\right\|
\leq \omega
\left\|u_0-v_0\right\|,
\end{equation}
with $\omega = 1 + \kappa \Delta t$
for small enough $\Delta t$.
\end{lemma}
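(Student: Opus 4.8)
The plan is to prove (\ref{error_Lip}) by a standard $L^2$ energy estimate on the difference of the two exact solutions, exploiting the dissipativity of the diffusion term. Write $u(t)=T^tu_0$ and $v(t)=T^tv_0$ for the solutions of (\ref{sys_rea_dif_u}) issued from $u_0$ and $v_0$; since $u_0,v_0\in\mathcal{S}_1(\R)$ and $f\in C^\infty(\R)$, both solutions are smooth and decay at infinity, so the manipulations below are legitimate. Subtracting the two equations, the difference $w=u-v$ solves
\[
\partial_t w - D\,\partial_x^2 w = f(u)-f(v),\qquad w(0,\cdot)=u_0-v_0.
\]

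The next step is to take the $L^2$ inner product of this identity with $w$ and integrate over $\R$. Integration by parts shows that the diffusion contribution equals $-D\,\|\partial_x w\|_{L^2}^2\le 0$ (and similarly if $D$ is a nonnegative diffusion matrix), so it may simply be dropped, while Cauchy--Schwarz together with the local Lipschitz hypothesis (\ref{error_Lipf}) bounds the reaction contribution by $\lambda\,\|w\|_{L^2}^2$. This yields
\[
\frac12\,\frac{d}{dt}\|w(t)\|_{L^2}^2 \le \lambda\,\|w(t)\|_{L^2}^2,
\]
and Gr\"onwall's lemma then gives $\|w(t)\|_{L^2}\le e^{\lambda t}\,\|w(0)\|_{L^2}$ on the interval of existence.

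It remains to translate the exponential factor into the affine form announced in the statement. Restricting $\Delta t$ to a bounded interval $[0,\Delta t_0]$, the elementary inequality $e^{\lambda t}\le 1+\kappa t$ with $\kappa=(e^{\lambda\Delta t_0}-1)/\Delta t_0$ holds for every $t\in[0,\Delta t_0]$ (the graph of the convex function $t\mapsto e^{\lambda t}$ lies below its chord over $[0,\Delta t_0]$), and evaluating at $t=\Delta t$ produces exactly (\ref{error_Lip}) with $\omega=1+\kappa\Delta t$, valid for $\Delta t$ small enough.

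The only genuine subtlety — and the point where the \emph{local} rather than global Lipschitz condition enters — is to guarantee that $u$ and $v$ stay, on $[0,\Delta t]$, in a fixed bounded set on which (\ref{error_Lipf}) holds with a uniform constant $\lambda$; this follows from the a priori bounds available for the reaction-diffusion problem (an invariant-region / maximum-principle argument for (\ref{sys_rea_dif_u})), after which $\lambda$ may be taken as the Lipschitz constant of $f$ on that set. One should also keep in mind that the energy computation above is the formal version of a classical parabolic argument; it is rigorous here thanks to the smoothness and decay of the data, but in a less regular framework it would be carried out on a mild or variational formulation, or recovered through a regularization and limiting procedure.
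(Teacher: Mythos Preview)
Your argument is correct and complete; in particular, the final step converting $e^{\lambda\Delta t}$ into $1+\kappa\Delta t$ on a bounded interval is exactly what the statement asks for, and your remark about the local Lipschitz constant is well taken.

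The paper, however, takes a different (though equally standard) route: instead of an energy estimate, it writes Duhamel's formula
\[
T^t u_0 - T^t v_0 = e^{t\partial_x^2}(u_0-v_0) + \int_0^t e^{(t-s)\partial_x^2}\bigl(f(T^s u_0)-f(T^s v_0)\bigr)\,{\rm d}s,
\]
takes norms using the contractivity of the heat semigroup and the Lipschitz bound (\ref{error_Lipf}), and then applies Gr\"onwall to the resulting integral inequality to reach $e^{\lambda t}\|u_0-v_0\|$. Your energy method exploits the dissipativity of the Laplacian through integration by parts and thus needs enough regularity to justify that step (which you correctly flag); the Duhamel argument only needs the semigroup to be non-expansive and works directly at the level of mild solutions. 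Both approaches are short and yield the same exponential bound; yours is perhaps more self-contained for smooth data, while the paper's is more robust under weaker regularity assumptions.
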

\begin{proof}
Using Duhamel's formula for (\ref{sys_rea_dif_u}) yields
\begin{equation}\label{duhamel}
T^t u_0 - T^t v_0 =
e^{t\partial^2_x}(u_0-v_0) + \int _0 ^t e^{(t-s)\partial^2_x}
\left(f(T^s u_0) - f(T^s v_0)\right) \, {\rm d} s.
\end{equation}
Taking norms and applying recursively (\ref{duhamel}),
\begin{eqnarray}\label{lambda}
\left\| T^t u_0 - T^t v_0\right\| & \leq &
\left\| u_0-v_0 \right\| +
\lambda \int _0 ^t \| T^s u_0 - T^s v_0\| \, {\rm d} s, \nonumber \\
& \leq & e^{\lambda t} \| u_0-v_0 \|,
\end{eqnarray}
proves (\ref{error_Lip}) for $t=\Delta t$ finite.
\end{proof}

If we define a local estimator,
$e_1= S^{a_1\Delta t}u_0-S^{b_1\Delta t}(S^{c_1\Delta t}u_0)$,
such that $a_1=b_1+c_1$,
we obtain that
\begin{eqnarray}\label{error_dt_2}
S^{b_1\Delta t}(S^{c_1\Delta t}u_0)-T^{a_1\Delta t}u_0
&=&
S^{b_1\Delta t}(S^{c_1\Delta t}u_0) - T^{b_1\Delta t}(S^{c_1\Delta t}u_0)\nonumber\\
& & +T^{b_1\Delta t}(S^{c_1\Delta t}u_0) - T^{b_1\Delta t}(T^{c_1\Delta t}u_0),
\nonumber\\
&=&
C_{S^{c_1\Delta t}u_0} b_1^3\Delta t^3 \nonumber\\
& &+T^{b_1\Delta t}(S^{c_1\Delta t}u_0) - T^{b_1\Delta t}(T^{c_1\Delta t}u_0),
\end{eqnarray}
where $C_{S^{c_1\Delta t}u_0}=C_1(S^{c_1\Delta t}u_0)+\mathcal{O}( \Delta t^4)$.
Therefore,
assuming that $C_{S^{c_1\Delta t}u_0}\approx C_0$
and considering Lemma \ref{lemma}, 
it
follows from the difference between
(\ref{error_C0}) at $a_1\Delta t$
and (\ref{error_dt_2}): 
\begin{eqnarray}\label{est_e1}
\|e_1-(a_1^3-b_1^3)C_0\Delta t^3\|
&\leq&
\omega
\|T^{c_1\Delta t}u_0-S^{c_1\Delta t}u_0\|,
\nonumber\\
&\leq&
\omega C_0c_1^3\Delta t^3.
\end{eqnarray}

Hence, 
defining a second local estimator,
$e_2= S^{a_2\Delta t}u_0-S^{b_2\Delta t}(S^{c_2\Delta t}u_0)$,
such that $a_2=b_2+c_2$, we obtain a second expression
similar to (\ref{est_e1}) with $e_2$ and $(a_2,b_2,c_2)$,
and we can estimate $C_0$ and $\omega$.
In particular, we 
notice that $b_1$ should be close to $b_2$ in order to better approximate
$\omega$ into (\ref{error_Lip}) and (\ref{est_e1}),
and that $c_1$ and $c_2$ should also be small enough to 
guarantee 
$C_{S^{c_1\Delta t}u_0}\approx C_0$ and $C_{S^{c_2\Delta t}u_0}\approx C_0$.
On the other hand, 
to optimize the required number of extra computations
from a practical point of view, 
we can use
the estimator $e_2$ to compute estimator $e_1$ by 
setting $a_2=c_1$, and we can also fix $a_1=1$ so we can use $S^{a_1\Delta t}u_0$ 
for the time integration of the problem.
In this way, the extra computations 
needed to compute
local estimators $e_1$ and $e_2$
will be given by 
$S^{c_2\Delta t}u_0$,
$S^{b_2\Delta t}(S^{c_2\Delta t}u_0)$,
$S^{c_1\Delta t}u_0$ and $S^{b_1\Delta t}(S^{c_1\Delta t}u_0)$
within a time step 
$\Delta t$.
Then, 
we will be able to compute $\omega$ and $C_0$,
by solving two expressions of type (\ref{est_e1}).
The next numerical example illustrates the validity of this numerical procedure.

\subsection{Numerical example of evaluation of critical $t^\star$: BZ equation}\label{BZ}
We are concerned with the numerical approximation of a model of 
the Belousov-Zhabotinski reaction, a catalyzed oxidation of an organic species by acid 
bromated ion (for more details and illustrations, see \cite{Epstein98}).
We thus consider the model introduced in \cite{Scott94} and coming from the classic work of 
Field, Koros and Noyes (FKN) (1972), 
which takes into account three species: $\mathrm{HBrO_2}$ (hypobromous acid), 
bromide ions $\mathrm{Br^-}$  and cerium(IV). 
Denoting by $a=[\mathrm{Ce(IV)}]$, $b=[\mathrm{HBrO_2}]$ and $c=[\mathrm{Br^-}]$, 
we obtain a very stiff system of three partial dif\-fe\-ren\-tial equations:
\begin{equation} \label{bz_eq_3var_diff}
\left.
\begin{array}{rcl}
\partial _t a - D_a\, \partial^2_x a&=&\displaystyle \frac{1}{\mu}(-qa-ab+fc),\\[2ex]
\partial _t b\, - D_b\, \partial^2_x b&=&\displaystyle 
\frac{1}{\epsilon}\left(qa-ab+b(1-b)\right),\\[2ex]
\partial _t c\, - D_c\, \partial^2_x c&=&b-c,
\end{array}
\right\}
\end{equation}
with diffusion coefficients  $D_a$, $D_b$ and $D_c$, and some real positive
parameters $f$, small $q$, and small $\epsilon$, 
$\mu$, such that  $\mu \ll \epsilon$.

The dynamical system associated with this system
models reactive excitable media with a
large time scale spectrum (see \cite{Scott94} for more details). 
Moreover, the spatial configuration with
addition of diffusion generates propagating wavefronts
with steep spatial gradients.
Hence, this model presents all the difficulties associated
with a stiff time-space multi-scale configuration.
The advantages of applying a splitting strategy to
these models have already been studied and presented in \cite{Dumont03}.

We consider the 1D application of problem (\ref{bz_eq_3var_diff})
with homogeneous Neumann boundary conditions in
a space region of
$[0,80]$ with a spatial discretization of $4001$ points,
good enough to prevent important spatial discretization errors,
and the following parameters, taken from \cite{Scott94}: 
$\epsilon = 10^{-2}$, $\mu = 10^{-5}$, $f=3$ and $q=2\times 10^{-4}$,
with diffusion coefficients $D_a=1$, $D_b=1$ 
and $D_c=0.6$.
Reference solution and Strang approximations are defined in the same way
as in the KPP application with the same tolerances for the time
integration solvers.

First of all, we validate
theoretical order estimates (\ref{estim1}), (\ref{estim2}) and (\ref{s_seps})
and verify relation (\ref{somme_err}).
Figure \ref{bz_error}
shows $L^2$ errors between $T^t u_0$, $S^t u_0$ and $S^t_\varepsilon u_0$
solutions for several $\varepsilon$ and 
the real $\Delta t^\star$
such that 
$\|T^{\Delta t}u_0-S^{\Delta t}u_0\|_{L^2} \approx \|S^{\Delta t}u_0-S^{\Delta t}_{\varepsilon}u_0\|_{L^2}$,
obtained after treating the numerical results. Maximum $L^2$ error considers the maximum value between
normalized local errors for 
$a$, $b$ and $c$ variables; in these numerical tests, it corresponds usually to variable $b$.
\begin{figure}[!htb]
 \begin{center}
 \includegraphics[width=0.45\hsize]{./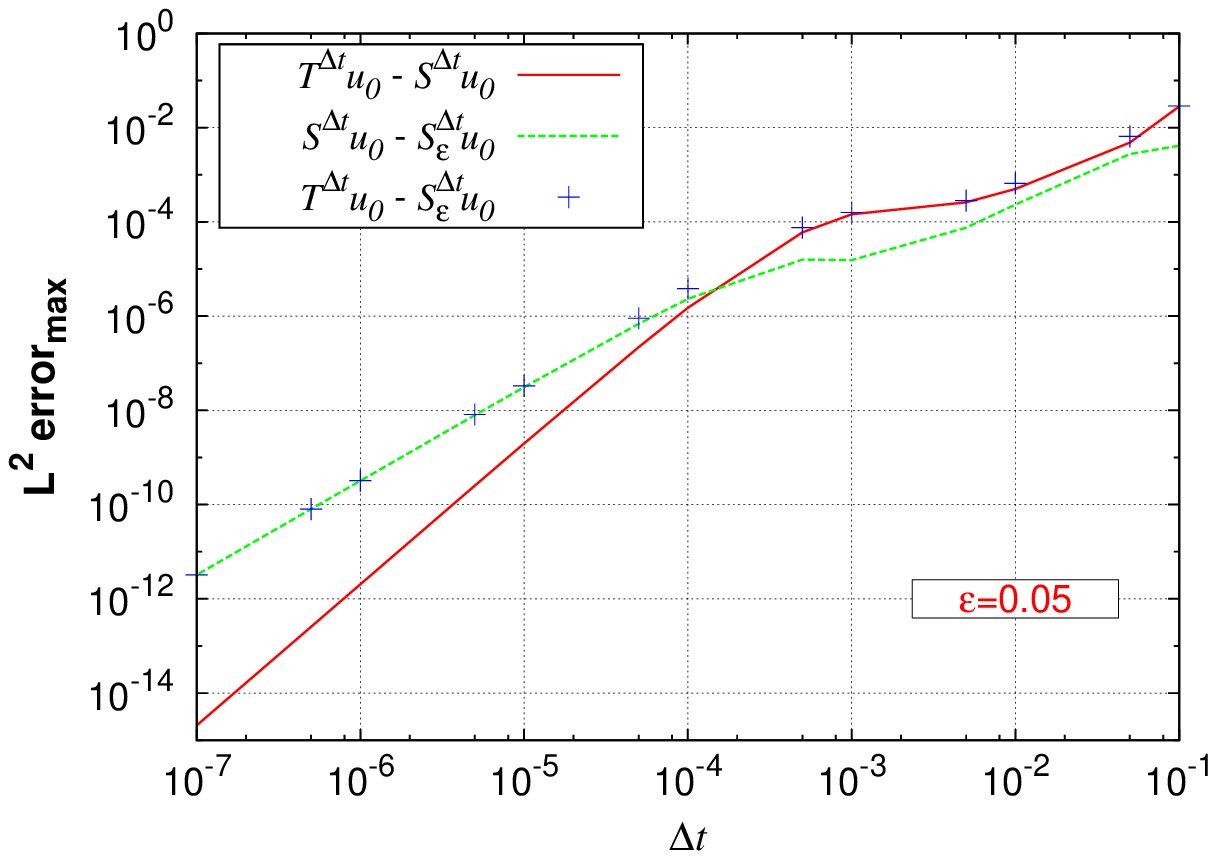}
 \includegraphics[width=0.45\hsize]{./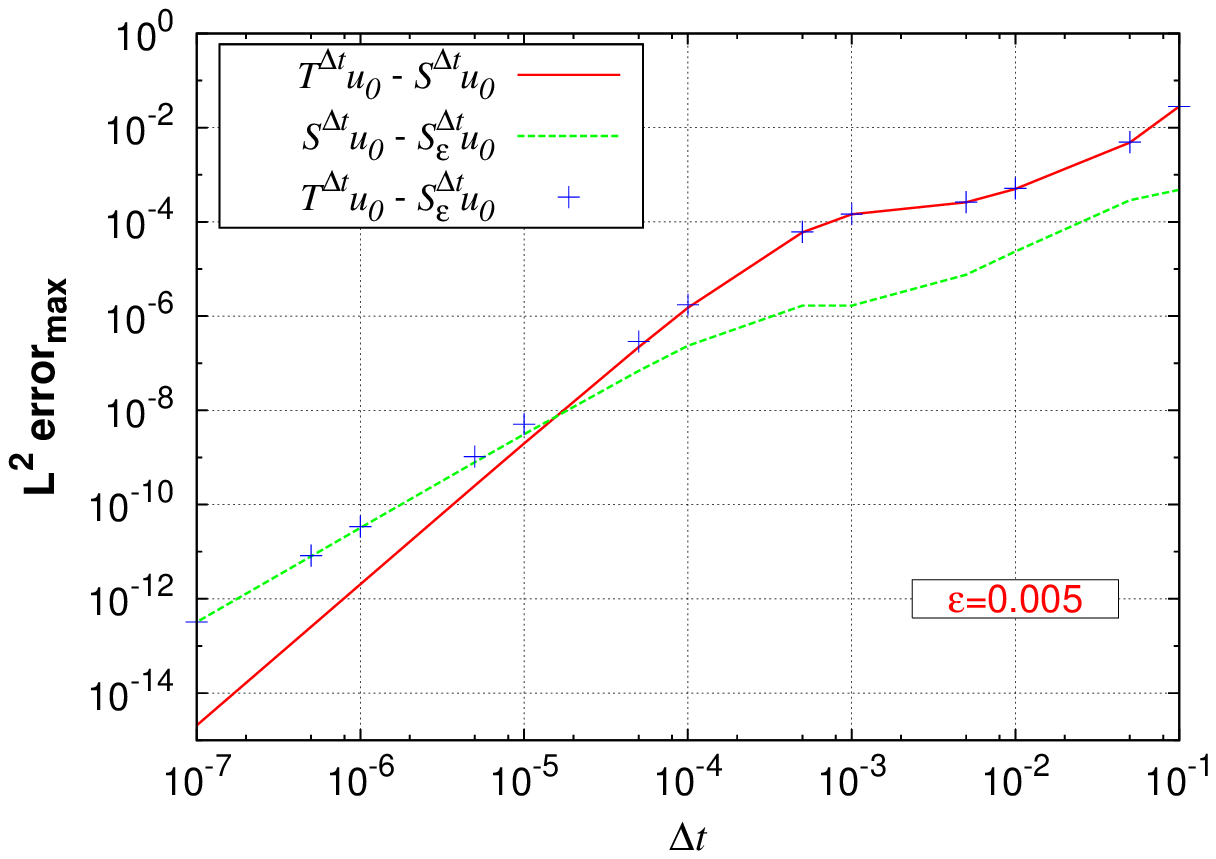}
 \includegraphics[width=0.45\hsize]{./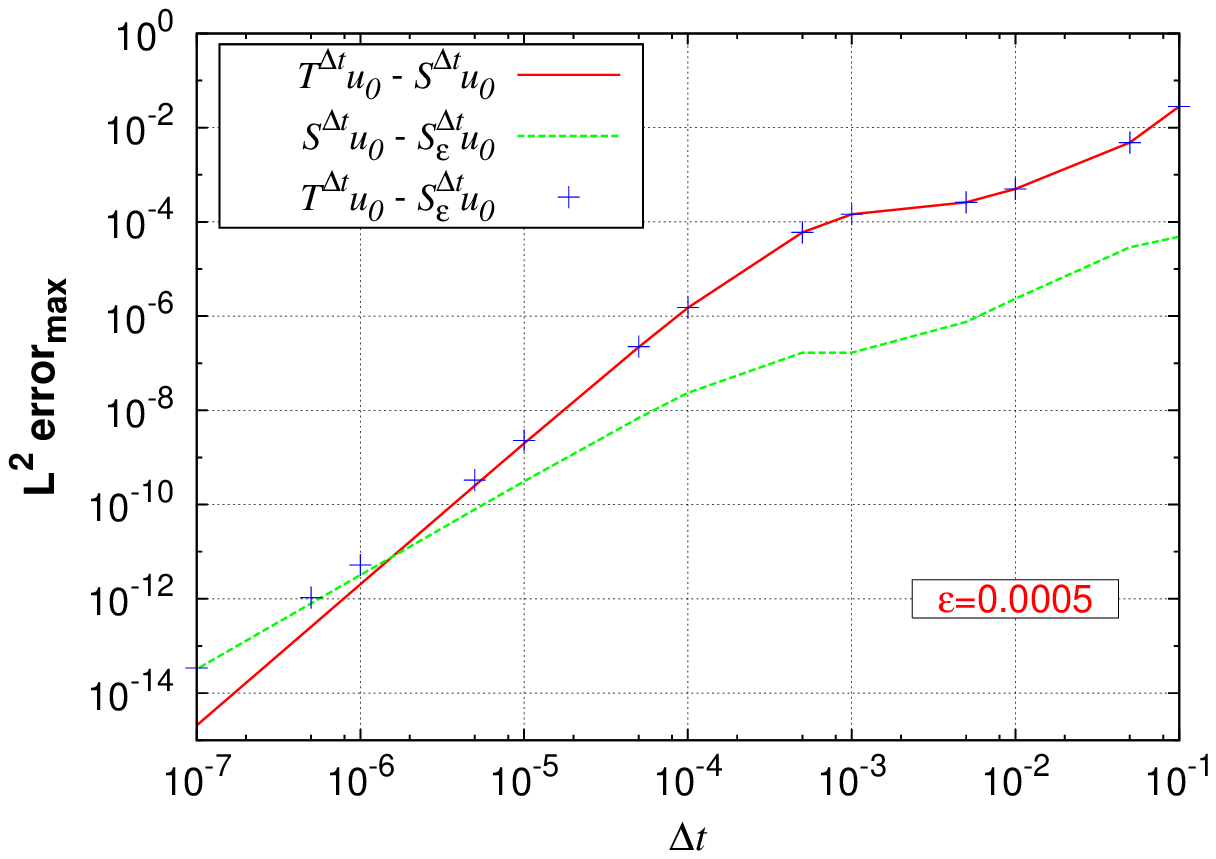}
 \includegraphics[width=0.45\hsize]{./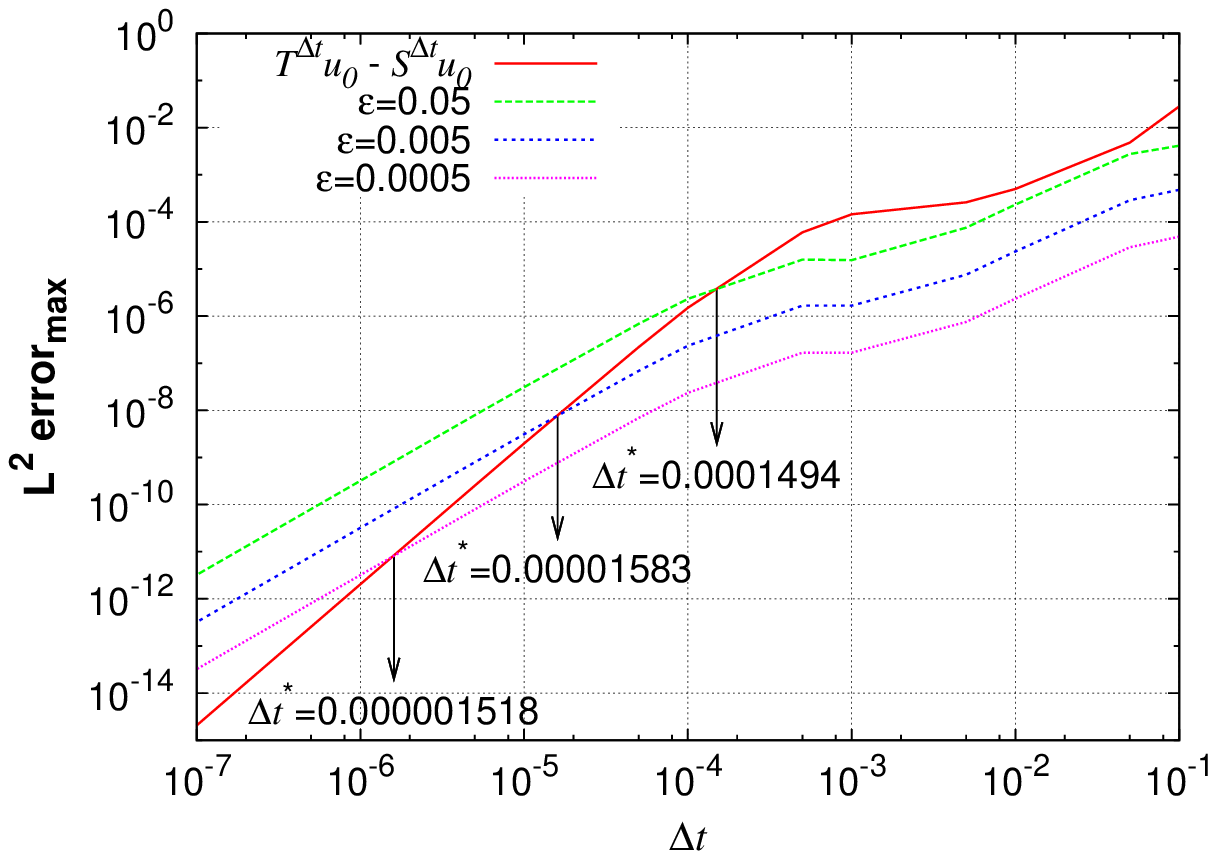} 
   \caption{BZ equation.
Maximum local $L^2$ errors for several splitting time steps $\Delta t$ and
$\varepsilon=0.05$ (top left), $0.005$ (top right) and $0.0005$ (bottom left).
 Bottom right: critical splitting time steps $\Delta t^\star$
obtained when $\|T^{\Delta t}u_0-S^{\Delta t}u_0\|_{L^2} \approx \|S^{\Delta t}u_0-S^{\Delta t}_{\varepsilon}u_0\|_{L^2}$
in the numerical tests. \label{bz_error}}
 \end{center}
\end{figure}

Let us now define the two sets 
$(a_1,b_1,c_1)$ and $(a_2,b_2,c_2)$,
and compute local estimators $e_1$ and $e_2$ in order to 
obtain $C_0$ according to (\ref{est_e1}) with $\Delta t=\Delta t_0=10^{-5}$;
that is a time step for which there is no order loss yet,
as seen in Figure \ref{bz_error}.
As it was previously detailed, we consider $a_1=1$
and $a_2=c_1$ to avoid some extra computations.
Furthermore, $b_2$ should be close to $b_1$, and
$c_1$ and $c_2$ small enough.
Setting $b_1$ larger than $1/2$ would yield 
more different $b_2$ since $c_1=a_2$.
On the other hand, for $b_1$ smaller than
$1/2$ we can even set $b_2=b_1$ but in this case
$c_1$ will be larger than $1/2$.
Therefore, we reach a compromise by setting
$b_1=1/2$ that yields $c_1=a_2=1/2$,
so we can choose for instance 
$b_2=2/5$ close to $b_1$, and thus,
$c_2=1/10$.

With the local error estimate, $err=\|S^{\Delta t}u_0-S^{\Delta t}_{\varepsilon}u_0\|_{L^2}$, 
for the various time steps
and several $\varepsilon$
shown in Figure \ref{bz_error},
Figure \ref{bz_est} presents the estimated critical $\Delta t^\star$ calculated 
with (\ref{dt_star})
from the estimated $C_0(\Delta t_0)$ and $err$.
These critical time steps, $\Delta t^\star$, 
estimated with (\ref{dt_star}) are in good agreement with numerically
measured $\Delta t^\star$ in Figure \ref{bz_error},
and depend on the value of $\varepsilon$.
Hence, the domain of application or working region of the method,
$\Delta t\leq \Delta t^\star$, might be settled depending
on the desired level of accuracy by means of an appropriate
choice of $\varepsilon$.
For instance, if we consider
the case $\varepsilon=0.05$ in Figure \ref{bz_error},
for $\Delta t=10^{-6}$, the local error estimate is given by $err \approx 10^{-10}$
whereas the real Strang local error is $\sim 10^{-12}$.
This overestimation of the local error will certainly imply
an underestimation in the required size of the time steps for a given tolerance.
Therefore, for a given tolerance $\eta$ a more suitable configuration
should consider an $\varepsilon$ such that $\Delta t \approx \Delta t^\star$
in order to reduce excessive overestimations of local errors.
\begin{figure}[!htb]
 \begin{center}
\includegraphics[width=0.45\hsize]{./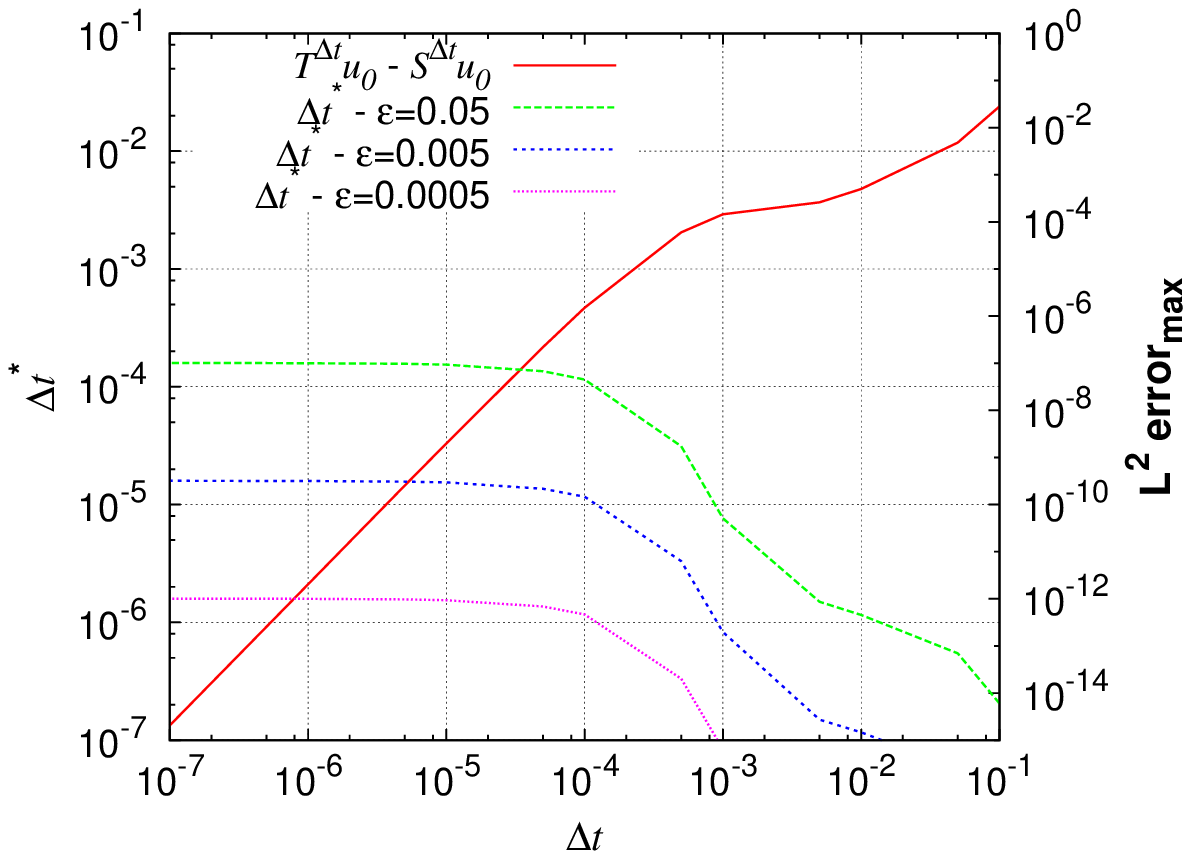} 
 \includegraphics[width=0.45\hsize]{./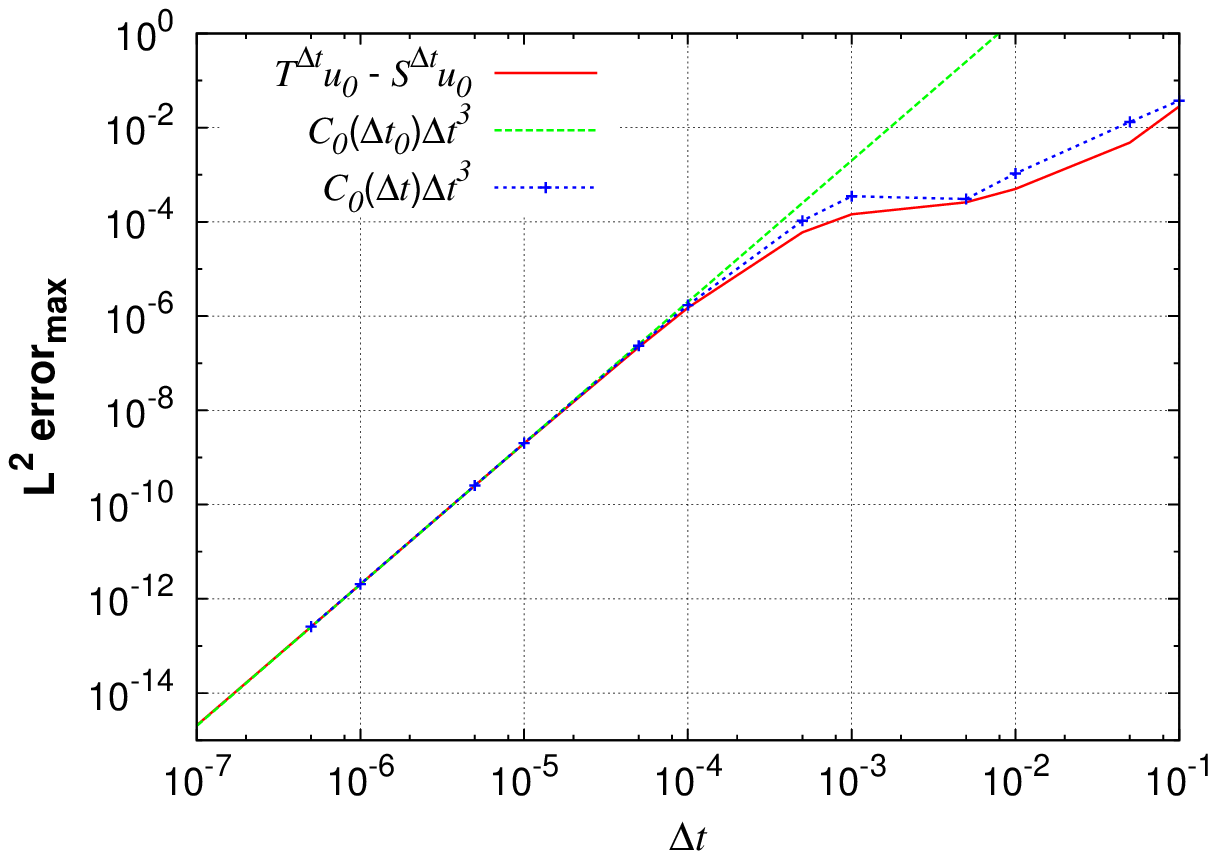}
   \caption{BZ equation.
Working region of the method $\Delta t\leq \Delta t^\star$ with 
$\Delta t^\star$ calculated with $C_0$ estimated at $\Delta t_0=10^{-5}$ and $err$ obtained for 
several splitting time steps $\Delta t$ and $\varepsilon$ (left).
Right:
predicted Strang error calculated with $C_0$ estimated at $\Delta t_0=10^{-5}$ and 
locally at
several splitting time steps $\Delta t$.}
\label{bz_est}
 \end{center}
\end{figure}

In the illustration shown 
in Figure \ref{bz_est}, 
$C_0$ was estimated in the third order region
of the method and therefore, all values are well approximated as long as $\Delta t$
remains in this region. In particular, critical $\Delta t^\star$ will be progressively
underestimated for larger $\varepsilon$ and consequently, it will impose
smaller time steps for a given tolerance; this is already the case for
$\varepsilon=0.05$, for which $\Delta t^\star$ is in the transition zone towards
the lower order region.
Even though the computation of $C_0$ with 
small time steps will be less expensive,
a much more accurate procedure considers current time step as shown
in Figure \ref{bz_est}.
In particular,
by estimating locally $C_0$, we are estimating real
Strang error and thus, $\Delta t \leq \Delta t^\star$
guarantees prescribed accuracy even if 
asymptotic order estimates are no longer verified.
This allows
to properly extend the domain of application over
the whole range of possible time steps for a given 
accuracy; an
extremely important issue
for real applications
for which splitting time steps may
go far beyond asymptotic behavior
including the potential order reduction
region associated with the stiffness of the problem.

\FloatBarrier

\subsection{Numerical strategy}\label{num_str}
Previous studies conducted in $\S~$\ref{tstar_eps}
and $\S~$\ref{BZ}
allow to properly complete the adaptive splitting strategy 
introduced in $\S~$\ref{AdaptSec}.
In this part we conduct the final description of the numerical
strategy.

Let us consider general problem (\ref{sys_reac_diff_gen}) for
$\mathbf{u} \in \mathbb{R}^{m}$, for which we use $S_2^t$ in
(\ref{strang_u}) as resolution scheme.
Depending on the problem, the adaptive method will be applied 
considering time evolution of 
$l \leq m$ variables:
$\mathbf{\tilde{u}} \in \mathbb{R}^{l}$.
Let us denote $\Omega _l$ the set of indices of these variables.
In order to consider only $l<m$ variables, the formers must be
decoupled of the remaining $m-l$ variables in the reactive term $\mathbf{f}(\mathbf{\tilde{u}})$ in (\ref{sys_reac_diff_gen}).
To simplify the presentation, we will only consider $\varepsilon \in (0,\varepsilon_{max})$,
$\varepsilon_{max}<1/2$.

We set the accuracy tolerance $\eta$,
an initial time step $\Delta t^0$ and initial $\varepsilon _0$,
and perform the time integration of (\ref{sys_reac_diff_gen}) 
with the Strang scheme $S_2^t$ and the embedded
shifted one $S^t_{2,\varepsilon}$ given by
(\ref{strangdec}).
We compute local error estimate $err$ and new time step 
$\Delta t^{\rm new}$ according to (\ref{delta_split_ef}).
If $err$ is smaller than $\eta$, current time step solution
is accepted and simulation time evolves; otherwise,
current solution is rejected and the time integration is 
recomputed 
with $\Delta t^{\rm new}$.
In particular, 
it is better to choose rather small $\Delta t^0$ 
to avoid initial rejections.

In order to guarantee an effective error control, we
define the working region $\Delta t \leq \Delta t^\star$
by estimating the corresponding $\Delta t^\star$
for current $\varepsilon$.
This is done for the first time step $\Delta t^0$
and then 
periodically after $N$ accepted time steps
depending on the problem,
based on the numerical 
procedure introduced in $\S~$\ref{tstar_eps}.
Computation of critical $\Delta t^\star$ is also performed with $\mathbf{\tilde{u}}$,
and a rather large initial $\varepsilon _0$ is suitable
to initially guarantee $\Delta t \leq \Delta t^\star$.

We define then a suitable working region 
$\Delta t \in [\beta \Delta t^\star, \gamma \Delta t^\star]$
with $0<\beta<\gamma \leq 1$,
for which 
splitting time steps are close to $\Delta t^\star$.
A new $\varepsilon$ is then computed if 
$\Delta t$ is much lower than $\Delta t^\star$ ($\Delta t<\beta \Delta t^\star$)
in order to avoid unnecessary small time steps;
or if $\Delta t$ is very close or possibly larger than $\Delta t^\star$ ($\Delta t >\gamma \Delta t^\star$)
with $\gamma$ close to one, in order to increase upper bound of the domain of application.
This guarantees that $\varepsilon$ is dynamically computed and properly adapted
to the dynamics of the phenomenon.

Finally,
the numerical resolution strategy can be summarized as
follows,
where $\mathbf{U} \in \mathbb{R}^{m\times n}$ stands for the spatial discretization of
$\mathbf{u}$ over $n$ points, $\mathbf{U} := (u^{(j,k)})$ such that $j\in [1,m]$
and $k \in [1,n]$.

\begin{itemize}
\item {\bf Input parameters.} Define accuracy tolerance $\eta$, time domain of study
$[t_0,T]$, initial time step $\Delta t^0$, initial $\varepsilon _0$, and period of
computation of $\Delta t^\star$: $N$. 
\item {\bf Initialization.} Set iteration counter $i=0$ and $t=t_0$,
$\mathbf{U}=\mathbf{U_0}$, $\Delta t = \Delta t^0$, $\varepsilon = \varepsilon _0$.
We define a flag $estimate$ initialized as {\tt .false.}.
Throughout the whole computation, we need to store $\mathbf{U}$, {\bf an array of size $m\times n$}.
\item {\bf Time evolution.} If $t<T$:
\begin{enumerate}
 \item 
Only if $\displaystyle \frac{i}{N}=\left\lfloor \frac{i}{N} \right\rfloor$ or $estimate$ is {\tt .true.}:\\
{\bf Computation of critical $\Delta t^\star$ I:} For the sets $(a_1,b_1,c_1)$ and $(a_2,b_2,c_2)$
with $a_1=1$ and $a_2=c_1$, we compute successively:
 \begin{itemize}
  \item  $\mathbf{\tilde{U}_1}=S^{c_2\Delta t} \mathbf{\tilde{U}_0}$, where $\mathbf{\tilde{U}_0}$ is 
built out of $\mathbf{U}$, $\mathbf{\tilde{U}_0} = (u^{(j,\cdot)})_{j\in \Omega _l}$;
  \item  $\mathbf{\tilde{U}_1}=S^{b_2\Delta t} \mathbf{\tilde{U}_1}$;
  \item  $\mathbf{\tilde{U}_2}=S^{c_1\Delta t} \mathbf{\tilde{U}_0}$;
  \item  $e_1= \max _{j\in\Omega _l}\|\tilde{u}_2^{(j,\cdot)} - \tilde{u}_1^{(j,\cdot)}\|$;
  \item  $\mathbf{\tilde{U}_2}=S^{b_1\Delta t} \mathbf{\tilde{U}_2}$;
  \item  $estimate$ is set to {\tt .true.}.
   \end{itemize}
These operations needs to store $\mathbf{\tilde{U}_1}$ and $\mathbf{\tilde{U}_2}$, 
{\bf two arrays of size $l\times n$}.

\item {\bf Time integration over $\Delta t$:}
We compute successively:
\begin{itemize}
 \item for each $k \in [1,n]$, $u_{new}^{(\cdot,k)}=Y^{\Delta t/2} u^{(\cdot,k)}$;
 \item for each $k \in [1,n]$, $\tilde{u}_1^{(\cdot,k)}=Y^{\varepsilon \Delta t} \left.u_{new}^{(j,k)}\right|_{j\in \Omega _l}$;
 \item $\mathbf{U_\star}=X^{\Delta t} \mathbf{U_\star}$, with $\mathbf{U_\star}=\phantom{x}^t (\mathbf{U_{new}},\mathbf{\tilde{U}_1})$;
 \item for each $k \in [1,n]$, $u^{(\cdot,k)}_\star=Y^{(1/2-\varepsilon)\Delta t} u^{(\cdot,k)}_\star$;
 \item for each $k \in [1,n]$, $u^{(\cdot,k)}_{new}=Y^{\varepsilon \Delta t} u^{(\cdot,k)}_{new}$;
 \item  $err= \max _{j\in\Omega _l}\|\tilde{u}_{new}^{(j,\cdot)} - \tilde{u}_1^{(j,\cdot)}\|$.

\end{itemize}
We need to store $\mathbf{U_{new}}$, {\bf an array of size $m\times n$}.

\item 
Only if $estimate$ is {\tt .true.}:\\
{\bf Computation of critical $\Delta t^\star$ II:}
We compute successively:
\begin{itemize}
 \item $e_2= \max _{j\in\Omega _l}\|\tilde{u}_{new}^{(j,\cdot)} - \tilde{u}_2^{(j,\cdot)}\|$;
 \item $C_0$ using (\ref{est_e1}) with $e_1$ and $e_2$;
 \item estimate $\Delta t^\star$ out of (\ref{dt_star}) and set $\Delta t^\star=\zeta \Delta t^\star$
with security factor $0<\zeta \leq 1$ close to one;
  \item  $estimate$ is set to {\tt .false.}.
\item If $\Delta t \notin [\beta \Delta t^\star, \gamma \Delta t^\star]$ with $0<\beta<\gamma \leq 1$:
$estimate$ is set to {\tt .true.}.
\end{itemize}

\item 
Only if $estimate$ is {\tt .true.} and $i > 0$:\\
{\bf Computation of $\varepsilon$:}
According to (\ref{dt_star}) with $err$, $C_0$ and $\Delta t^\star=\Delta t$:
\begin{itemize}
 \item $\varepsilon=\min \{\theta \varepsilon,\varepsilon_{max}\}$ with $\theta \geq 1$ as security factor;
\item computation of  $\Delta t^\star$ with new $\varepsilon$;
\item $estimate$ is set to {\tt .false.}.

\end{itemize}

\item {\bf Computation $\Delta t^{new}$:}
According to (\ref{delta_split_ef}) with security factor $0<\upsilon \leq 1$ close to one.
\begin{itemize}
\item If $\Delta t > \Delta t^\star$: set $err=tol + C$ with $C>1$. Used to potentially
reject initial $\Delta t=\Delta t^0$.
 \item If $\Delta t^{new}>\Delta t^\star$ and $\varepsilon \neq \varepsilon _{max}$: $estimate$ is set to {\tt .true.}.
\item $\Delta t = \min \{\Delta t^{new},\Delta t^\star\}$.
\item If $err \leq tol$: $t=t+\Delta t$, $i=i+1$, $\Delta t = \min \{\Delta t,T-t\}$
and $\mathbf{U}=\mathbf{U_{new}}$.
\end{itemize}

\end{enumerate}

\end{itemize}

In this strategy, 
reaction is always integrated point by point if the reactive term is modeled by a 
system of ODEs without spatial coupling. This integration can be performed completely in parallel
\cite{article_avc,article_ESAIM}.
On the other hand,
for linear diffusion problems, another alternative considers a variable by variable resolution,
for each $j \in [1,m]\bigcup \Omega_l$:
\begin{equation}
u_{\star}^{(j,\cdot)}=X^{\Delta t} u_\star^{(j,\cdot)},
\end{equation}
that can also be performed in parallel 
\cite{article_avc}.

Depending on the problem, either the
computation of critical $\Delta t^\star$ (steps (1), (3) and (4)),
or the computation of $\varepsilon$ (step (4))
can be potentially removed if one considers large enough $\varepsilon _0$
and fine enough $\eta$.
Finally,
the whole strategy with all steps needs to store at worst
two arrays of size $l\times n$ and other two of size $m\times n$,
beyond memory requirements of diffusion and reaction solvers.

\section{Final Numerical Evaluation of the Method}\label{Appli}
In this last part,
we evaluate the performance of the method in terms of accuracy of the simulation,
and show that an effective control of the simulation error is performed
in the context of two different problems.
First,
we will consider a propagating wave
featuring time-space multi-scale character.
Then, the
potential of the method is fully exploited for a more complex configuration of repetitive
gas discharges generated by high frequency pulsed applied 
electric fields followed by long time
scale relaxation, for which a precise description of discharge and post-discharge
phases is achieved.

\subsection{BZ equation revisited}
Coming back to BZ model, we perform a time integration of
(\ref{bz_eq_3var_diff}) with several accuracy tolerances $\eta$.
First of all, we consider the numerical strategy detailed in $\S~$\ref{num_str}
without taking into account steps (1), (3) and (4), that is 
without computation of neither critical $\Delta t^\star$ nor $\varepsilon$.
We set $\Delta t^0= 10^{-7}$ and $\varepsilon _0=0.05$ in all cases,
with $t\in[0,2]$.
In this example, a rather small initial splitting time step is 
chosen to avoid initial rejections even though this initial
rejection phase usually does not take many steps as it will shown in 
the next example.
On the other hand, we have chosen a intermediary value
for $\varepsilon$ in order to clearly distinguish the 
different behaviors of the strategy in terms of prediction
of the local errors depending on the proposed tolerance.

Figure \ref{bz_step_error} shows time evolution of accepted splitting
time steps $\Delta t$. In this case, BZ equation models a propagating
self-similar wave, so splitting time step stabilizes once the overall phenomenon
is solved within the prescribed tolerance $\eta$. 
Local error estimates $err$ are also shown, which naturally verify
prescribed accuracy, since we impose time steps
for which 
$err$ is limited by $\eta$ through (\ref{delta_split_ef}).
\begin{figure}[!htb]
 \begin{center}
 \includegraphics[width=0.45\hsize]{./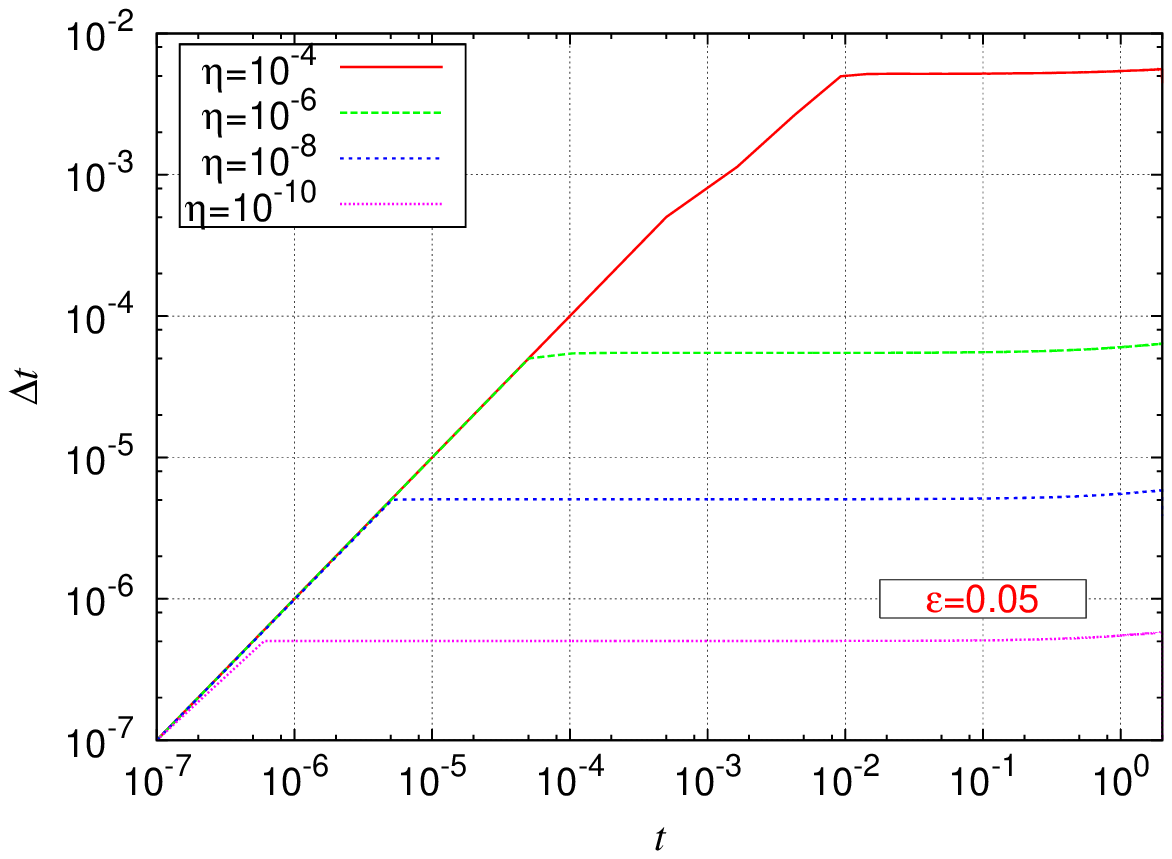}
 \includegraphics[width=0.45\hsize]{./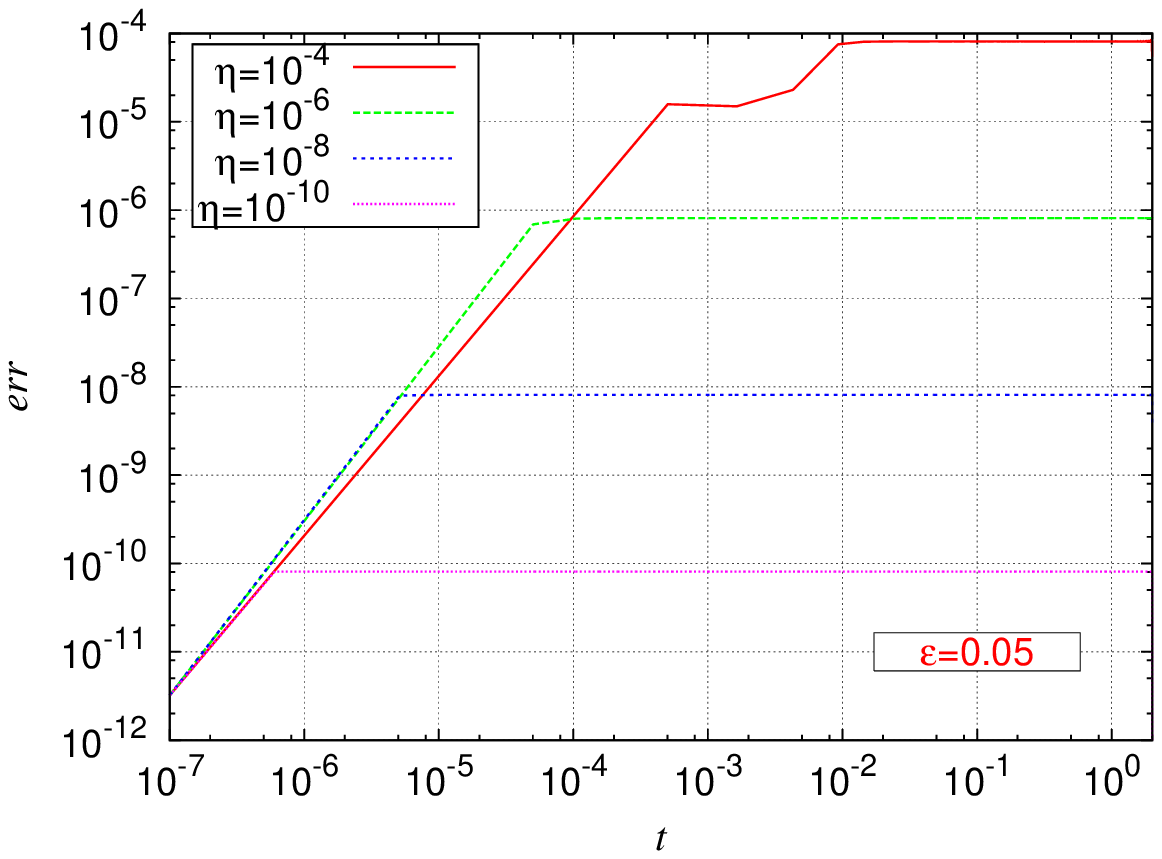} 
   \caption{BZ equation.
Time evolution of
accepted splitting time steps $\Delta t$ (left)
and
local $L^2$ error estimates $err=\|S^{\Delta t}u_0-S^{\Delta t}_{\varepsilon}u_0\|_{L^2}$
(right),
for several tolerances
$\eta$ and $\varepsilon=0.05$.
 \label{bz_step_error}}
 \end{center}
\end{figure}

Table \ref{bz_err_global} summarizes global $L^2$ errors between splitting and reference
solutions at the end of the
time domain of study, $t=2$.
For a fine enough $\eta$ and consequently, small enough time steps,
a precise error control is achieved by the
local error control strategy as we could have expected from previous results
in Figure  \ref{bz_error} for $\varepsilon = 0.05$.
Nevertheless, for $\eta=10^{-4}$ we can see rather high global errors even
if this configuration considers naturally less time integration steps and
thus, less accumulation of local approximation errors.
If we take a look at Figure \ref{bz_error}, we note that for 
$\varepsilon = 0.05$ and local errors of about $10^{-4}$,
the local error estimate, $err$, is not predicting properly real Strang errors, as it was previously
discussed,
since $\Delta t > \Delta t^\star$.
Therefore, a strategy that introduces
a more precise description of errors for 
a larger range of time steps
must be considered, whenever the required accuracy casts the method
away from its asymptotic behavior.
This is an under covered difficulty of any time adaptive technique
based on a lower order embedded method,
and to our knowledge, an open problem that has not been studied much,
and that this work tries to overcome.
\begin{table}[!htb]
\caption{BZ equation. 
$L^2$ errors at final time $t=2$ for $a$, $b$, $c$ variables and
several tolerances $\eta$.}
\label{bz_err_global}
\begin{center}
\begin{tabular}{cccc}
\hline\noalign{\smallskip}
$\eta$& $L^2$ error $a$ & $L^2$ error $b$ & $L^2$ error $c$ \\ 
\noalign{\smallskip}
\hline
\noalign{\smallskip}
$10^{-4}$ & $7.97\times10^{-3}$ & $1.07\times10^{-2}$ & $4.72\times10^{-3}$ \\
$10^{-6}$ & $1.71\times10^{-6}$ & $1.83\times10^{-6}$ & $7.98\times10^{-7}$ \\
$10^{-8}$ & $1.45\times10^{-8}$ & $1.54\times10^{-8}$ & $6.78\times10^{-9}$ \\
$10^{-10}$ & $1.74\times10^{-10}$ & $1.75\times10^{-10}$ & $1.08\times10^{-10}$ \\
\hline
\end{tabular}
\end{center}
\end{table}

Let us now consider the entire strategy with all steps for several
tolerances with $\Delta t^0= 5\times 10^{-7}$ and $\varepsilon _0=0.05$.
In the following illustrations we have considered
the following parameters:
$\varepsilon_{max}= 0.999$;
$a_1=1$, $b_1=c_1=a_2=1/2$, $b_2=2/5$ and $c_2=1/10$ for intermediary time 
steps evaluations;
$\zeta = 0.9$ as security factor of critical $\Delta t^\star$ estimate;
$\beta = 0.1$ and $\gamma = 0.95$ to define the working region 
$\Delta t \in [\beta \Delta t^\star, \gamma \Delta t^\star]$;
$\theta = 10$ as security factor of $\varepsilon$ estimate;
$C=10$ to potentially reject initial time step $\Delta t^0$;
and $\upsilon = 0.9$ as security factor of $\Delta t^{new}$ estimate.
All local estimators, $err$, $e_1$ and $e_2$, are computed with normalized $L^2$ norms.

Considering the propagating phenomenon, we set $N=10$, but we estimate
$\Delta t^\star$ only twice for $i=0$ and $i=N$.
Figure \ref{bz_step_error_corr} shows time evolution of
splitting time steps; there are different scenarios depending on
the required accuracy.
In all cases for $\varepsilon _0=0.05$, we estimate initially
$\Delta t^\star \approx 1.4\times 10^{-4}$.
For $\eta=10^{-4}$, this limitation implies smaller time steps
than what is required for the prescribed tolerance.
Thus, $\Delta t$  increases until $\Delta t^{new} > \Delta t^\star$
and a new $\varepsilon$ is estimated: $\varepsilon \approx 0.43$.
No substantial changes are made when $i=N$, since 
$\Delta t \in [\beta \Delta t^\star, \gamma \Delta t^\star]$
for the current $\eta$.
\begin{figure}[!htb]
 \begin{center}
 \includegraphics[width=0.45\hsize]{./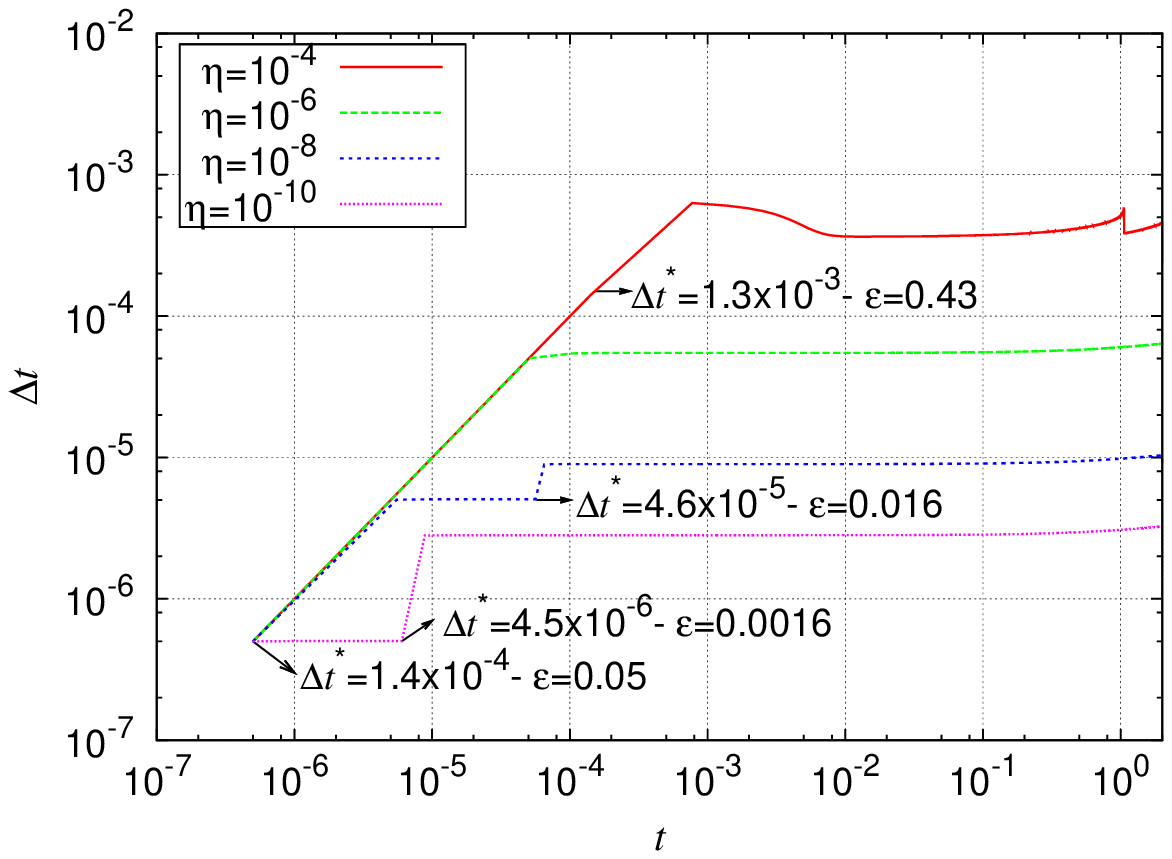}
 \includegraphics[width=0.45\hsize]{./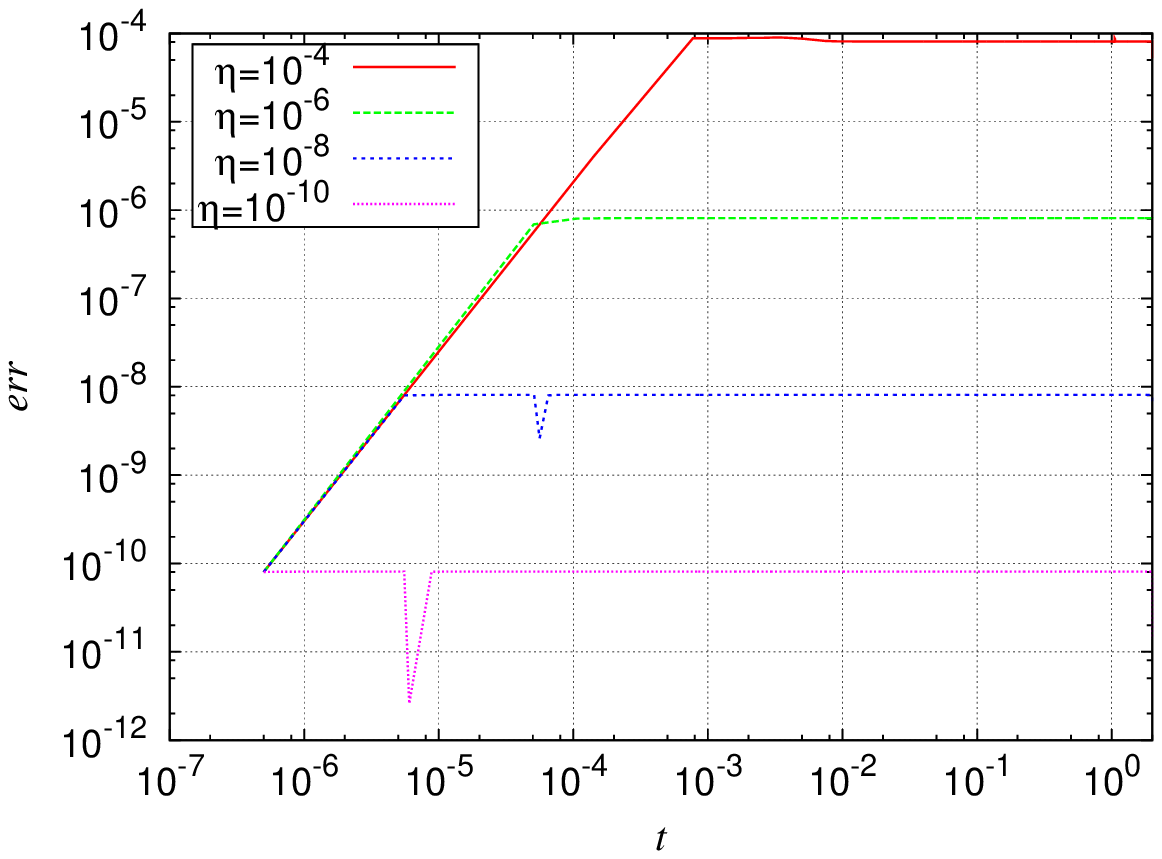} 
   \caption{BZ equation.
Time evolution of
accepted splitting time steps $\Delta t$ (left)
and
local $L^2$ error estimates $err=\|S^{\Delta t}u_0-S^{\Delta t}_{\varepsilon}u_0\|_{L^2}$
(right),
for several tolerances
$\eta$,
considering critical $\Delta t^\star$ and computation of $\varepsilon$.
 \label{bz_step_error_corr}}
 \end{center}
\end{figure}

For $\eta=10^{-6}$, we keep initial $\Delta t^\star$ and $\varepsilon_0$
since
$\Delta t \in [\beta \Delta t^\star, \gamma \Delta t^\star]$
as we can see in Figure \ref{bz_error}.
Finally, for $\eta=10^{-8}$ and $\eta=10^{-10}$,
$\Delta t < \beta \Delta t^\star$ and thus, $\varepsilon$
is recomputed, giving respectively $\varepsilon \approx 0.016$
and $0.0016$.
In particular, we consider larger splitting time steps
for which Strang local errors are better predicted.
Table \ref{bz_err_global_corr} shows that error control is this time guaranteed for all
values of tolerance $\eta$,
and thus, for a larger range of time steps.
Compared with previous results in Table \ref{bz_err_global}, we 
correct completely the errors in the prediction of local error estimates,
which yields more accurate
resolutions for the largest tolerances; 
whereas slightly less accurate results
are obtained for the smallest tolerances since larger splitting time steps are considered.
\begin{table}[!htb]
\caption{BZ equation. 
$L^2$ errors at final time $t=2$ for $a$, $b$, $c$ variables and
several tolerances $\eta$, considering critical $\Delta t^\star$ and computation of $\varepsilon$.}
\label{bz_err_global_corr}
\begin{center}
\begin{tabular}{cccc}
\hline\noalign{\smallskip}
$\eta$& $L^2$ error $a$ & $L^2$ error $b$ & $L^2$ error $c$ \\ 
\noalign{\smallskip}
\hline
\noalign{\smallskip}
$10^{-4}$ & $6.85\times10^{-5}$ & $9.04\times10^{-5}$ & $4.06\times10^{-5}$ \\
$10^{-6}$ & $1.71\times10^{-6}$ & $1.83\times10^{-6}$ & $7.98\times10^{-7}$ \\
$10^{-8}$ & $4.53\times10^{-8}$ & $4.84\times10^{-8}$ & $2.12\times10^{-8}$ \\
$10^{-10}$ & $4.48\times10^{-9}$ & $4.77\times10^{-9}$ & $2.15\times10^{-9}$ \\
\hline
\end{tabular}
\end{center}
\end{table}

\subsection{Simulation of multi-pulsed gas discharges}
In this section, 
we consider a simplified model of 
plasma discharges at atmospheric pressure
for which 
we analyze the performance of the proposed numerical strategy in a
configuration
of nanosecond repetitively pulsed discharges.
This kind of phenomenon is studied
for plasma assisted combustion or flow control, 
for which the enhancement of the gas flow chemistry or momentum transfer 
during typical time scales of the flow of $10^{-4}-10^{-3}$s, 
is due to consecutive discharges generated by high frequency (in the kHz range) sinusoidal or 
pulsed applied voltages \cite{Pilla:2006}.
As a consequence, during the post-discharge phases of the order of tens of microseconds, not only 
time scales are very different from those during discharges of a few tens of nanoseconds,
but a complete different physics is taking place.
Then, to the rapid multi-scale configuration during discharges,
we have to add other rather slower multi-scale phenomena 
in the post-discharge,  
such as recombination of charged species, heavy-species chemistry, diffusion, gas heating and convection.
Therefore,
it is 
very challenging 
to efficiently simulate this kind of highly multi-scale problems
and to accurately describe the physics of the plasma/flow interaction 
between consecutive 
discharge/post-discharge phases. 

General model to study gas discharge dynamics is 
based on the following drift-diffusion equations for electrons and ions, 
coupled with Poisson's equation \cite{Babaeva:1996,Kulikovsky:1997c}:
%%%%%%%%
\def\ne{n_{\rm e}}
\def\np{n_{\rm p}}
\def\nn{n_{\rm n}}
\def\ve{\mathbf{v}_{\! \rm e}}
\def\vp{\mathbf{v}_{\! \rm p}}
\def\vn{\mathbf{v}_{\! \rm n}}
\def\De{D_{\rm e}}
\def\Dp{D_{\rm p}}
\def\Dn{D_{\rm n}}
\def\SeP{\ne\alpha |\mathbf v_{\rm e}|}
\def\SpP{\ne\alpha |\mathbf v_{\rm e}|}
\def\SeM{\ne\eta  |\mathbf v_{\rm e}| +  \ne\np\beta_{\rm ep}}
\def\SpM{\ne\np\beta_{\rm ep} + \nn\np\beta_{\rm np}}
\def\SnM{\nn\np\beta_{\rm np}}
\def\SnP{\ne\eta  |\mathbf v_{\rm e}|}
\def\SD{\nn\gamma}
%%%%%%%%%
\begin{equation}\label{trasp} 
\left.
\begin{array}{rcl}
\partial_t \ne -\partial_ \mathbf{x}\cdot\ne\,\ve 
  -\partial_ \mathbf{x}\cdot(\De\ \partial_ \mathbf{x}\ne) &=& \SeP -\SeM + \SD,\\ 
\partial_t \np +\partial_ \mathbf{x}\cdot\np\vp 
  -\partial_ \mathbf{x}\cdot(\Dp\,\partial_ \mathbf{x}\np) &=& \SpP -\SpM,\\ 
\partial_t \nn -\partial_ \mathbf{x}\cdot\nn\vn 
  -\partial_ \mathbf{x}\cdot(\Dn\,\partial_ \mathbf{x}\nn)  &=& \SnP -\SnM - \SD, 
\end{array}
\right\}
\end{equation}
\begin{equation}
\varepsilon_0\, \partial_ \mathbf{x}^2 V = -q_{\rm e}(\np-\nn-\ne), \label{poisson}
\end{equation}
where 
$\mathbf{x}\in \mathbb{R}^d$,
$n_i$ is the  density of species $i$ (e: electrons, p: positive ions, n: negative ions),
$V$ is the electric potential,
$\mathbf{v}_i= \mu_i \mathbf E$ ($\mathbf E$ being the electric field) is the drift velocity.
$D_i$ and $\mu_i$,  are diffusion coefficient and absolute value of
mobility of charged species $i$,
$q_{\rm e}$ is the absolute value of electron charge, 
and   $\varepsilon_0$  is permittivity of free space.
$\alpha$ is the impact ionization coefficient, $\eta$
stands for electron attachment on neutral molecules, $\beta_{\rm ep}$
and $\beta_{\rm np}$
accounts respectively for electron-positive ion
and 
negative-positive ion recombination,
and $\gamma$ is the detachment coefficient.
Electric field $\mathbf E$ and potential $V$ are related by
\begin{equation}\label{pois2}
\mathbf E = - \partial_ \mathbf{x} V.
\end{equation}

Nevertheless, in this paper, we will consider a simplified reaction-diffusion
1D model
based on (\ref{trasp}):
\begin{equation}\label{trasp_2} 
\left.
\begin{array}{rcl}
\partial_t \ne  %
  -D\, \partial^2_x \ne &=& \SeP -\SeM,\\ 
\partial_t \np 
  -D\, \partial^2_x \np &=& \SpP -\SpM,\\ 
\partial_t \nn
  -D\, \partial^2_x \nn  &=& \SnP -\SnM. 
\end{array}
\right\}
\end{equation}
As in (\ref{trasp}),
all the coefficients of the model are 
functions of the local reduced electric field $E/N_{\rm gas}$,
where $E$ is the electric field magnitude and $N_{\rm gas}$ is the
air neutral density. 
Transport parameters and reaction rates
for air are taken from \cite{Morrow:1997}, with
attachment coefficients taken 
from \cite{Kossyi:1992}.

In this numerical illustration,
we consider an air gap of $0.5\,$cm where we have a high initial
distribution of electrons and ions over the region 
$[0,0.01]\,$cm.
A constant electric field of $\sim 40\,$kV/cm is then applied over this region
during $10\,$ns with a pulse period of $1\,\mu$s.
All parameters in (\ref{trasp_2}) are computed with the imposed field
without solving neither (\ref{poisson}) nor (\ref{pois2}).
Finally, we consider a constant diffusion coefficient: $D=50\,$cm$^2$/s
and a spatial discretization of $1001$ points.
Figure \ref{charges} shows the spatial distribution of electron density
just before and after each pulse.
Globally,
there are at least two completely different physical configurations
given either by high reactive activity whenever the electric field is 
applied, or rather by the propagative nature of the post-discharge phase.
\begin{figure}[!htb]
 \begin{center}
 \includegraphics[width=0.45\hsize]{./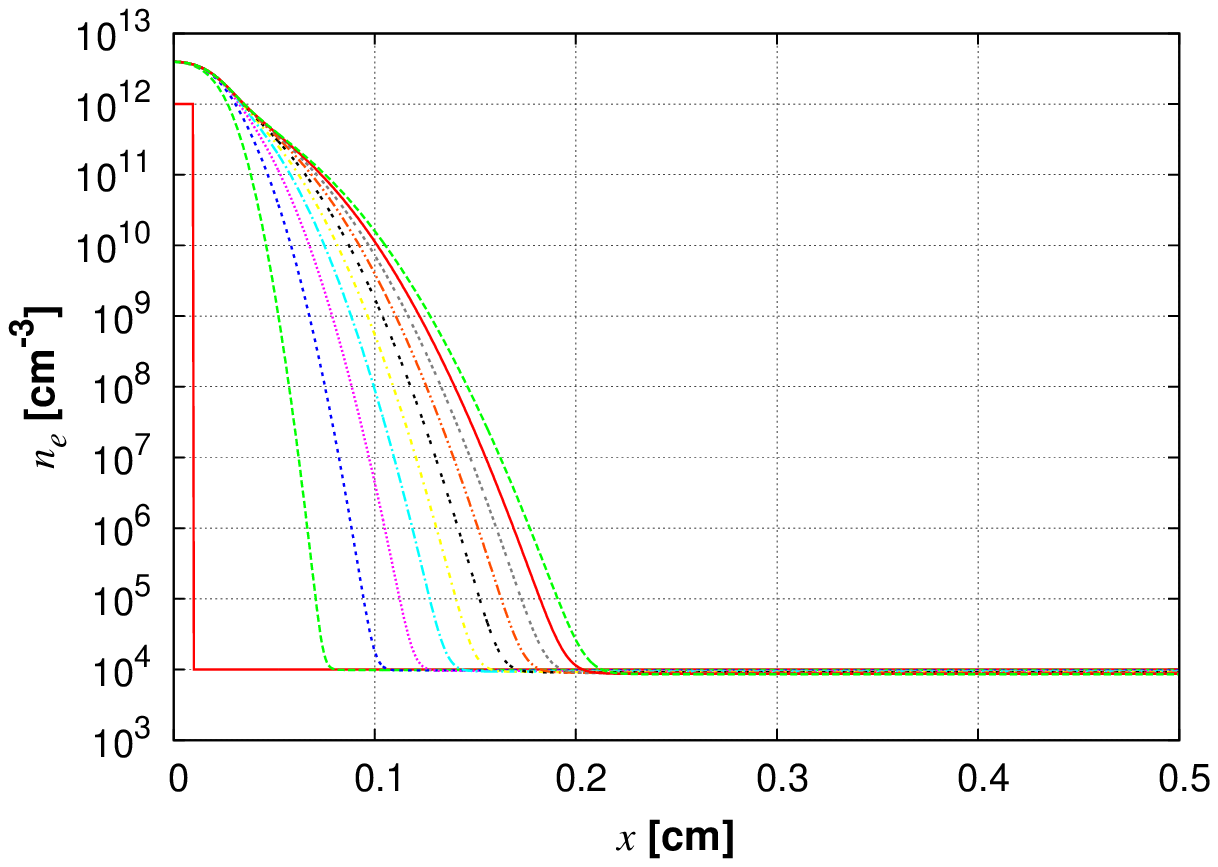}
 \includegraphics[width=0.45\hsize]{./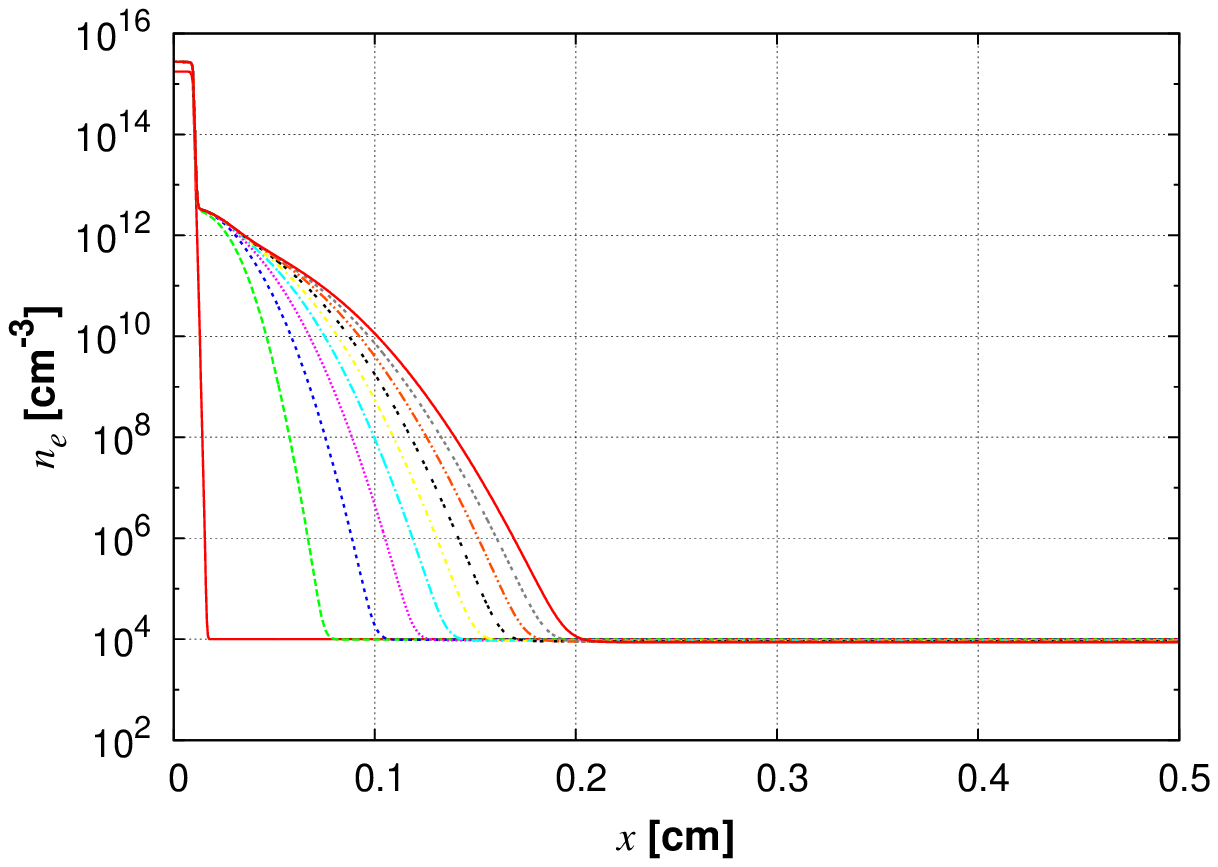} 
   \caption{Repetitive gas discharge model.
Spatial distribution of electron density before (left) and after (right) each pulse,
starting from initial distribution (left) and for a duration of ten pulses.
 \label{charges}}
 \end{center}
\end{figure}

Considering the adaptive strategy described in $\S~$\ref{num_str}
with $\Delta t^0= 10^{-10}$,  $\varepsilon _0=0.05$ and the same
parameters used for the previous BZ simulation,
computation is initialized with a time step included in the pulse
duration.
Figure \ref{charges2} shows the corresponding splitting time steps
for a tolerance of $\eta = 10^{-3}$.
Splitting time step features a periodic behavior and succeed to 
consistently adapt itself to the discharge/post-discharge phenomena.
This yields high varying time steps going from $\sim 10^{-10}$
to $\sim 10^{-7}$.
Therefore,
after each post-discharge phase, since
the new time step is computed based on the previous one
according to (\ref{delta_split_ef}), this new time step
will surely skip the next pulse.
In order to avoid this, each time we get into a new
period, we
initialize time step with the length of the pulse: $\Delta t = 10\,$ns;
this time step is obviously rejected as seen in Figure 
\ref{charges2}, as well as the next ones, until
we are able to retrieve the right dynamics of the phenomenon
for the required accuracy tolerance.
No other intervention is needed neither for modeling parameters
nor for numerical solvers in order to automatically
adapt time step to describe the several time scales of
the phenomenon within a prescribed accuracy.
\begin{figure}[!htb]
 \begin{center}
 \includegraphics[width=0.45\hsize]{./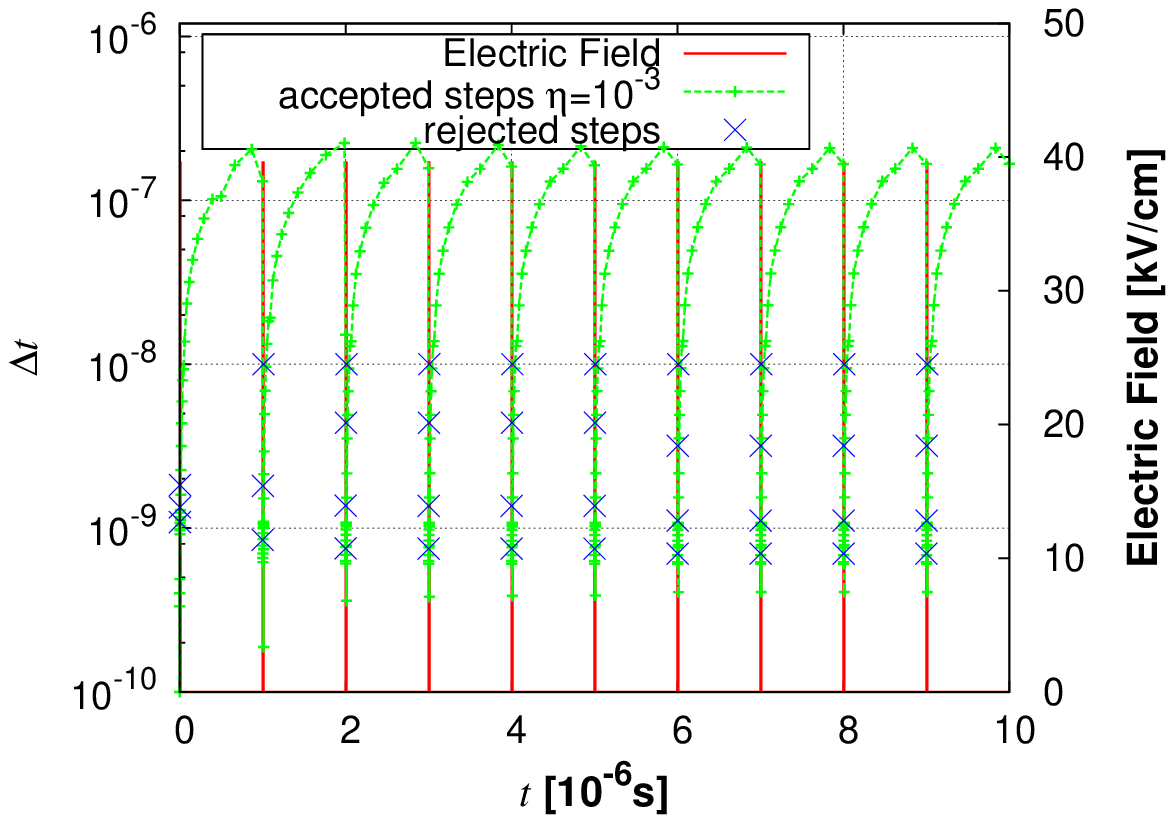}
 \includegraphics[width=0.45\hsize]{./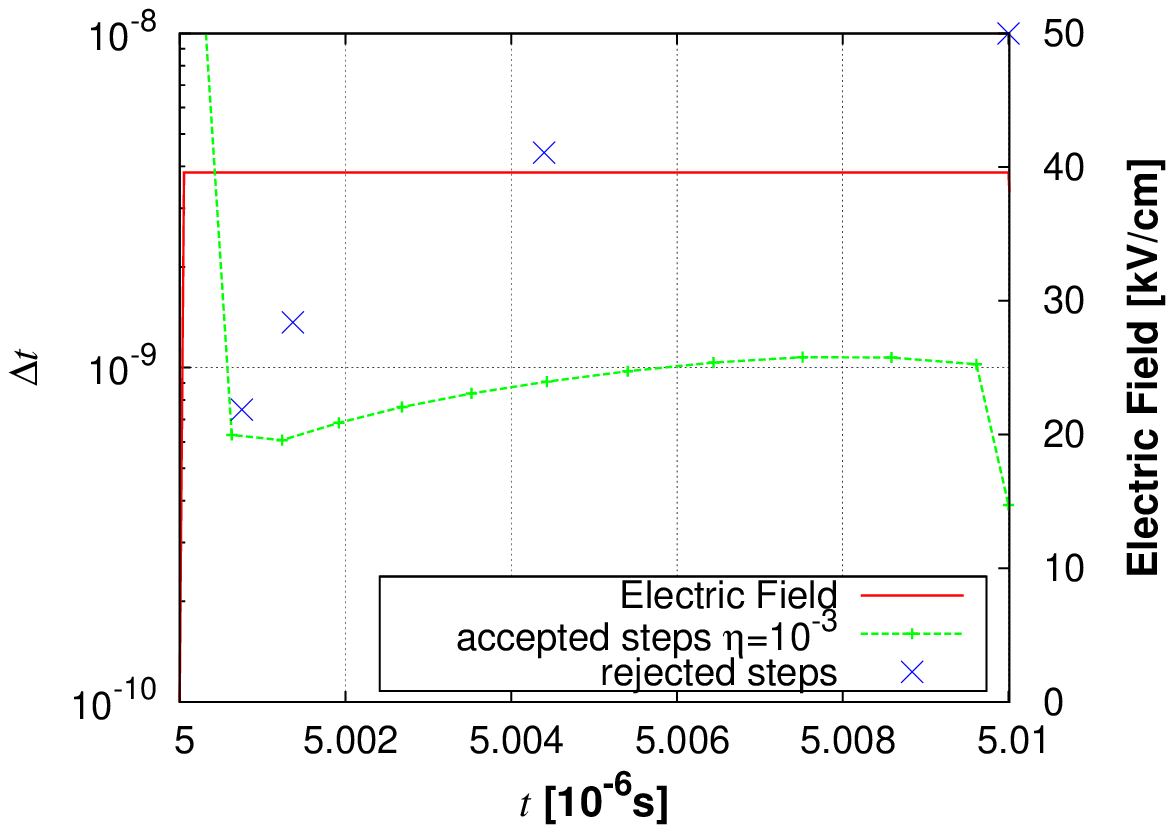}  
 \includegraphics[width=0.45\hsize]{./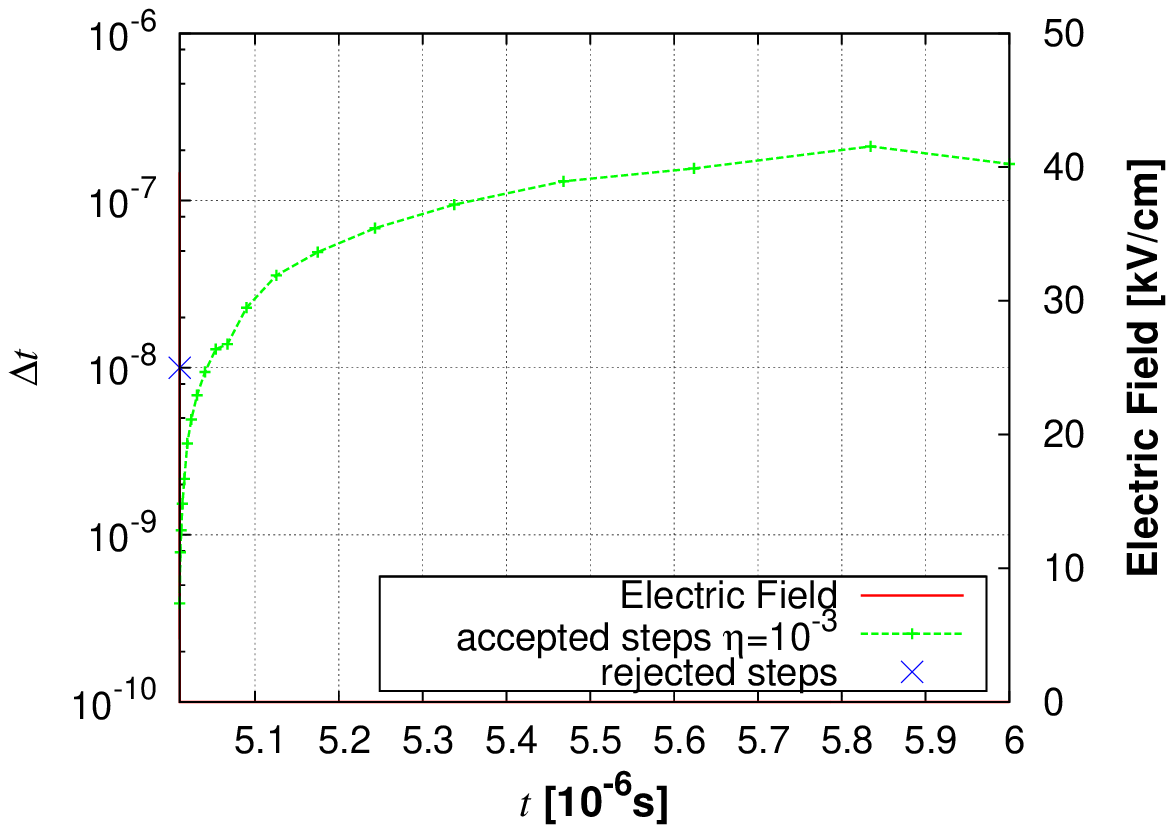}  
 \includegraphics[width=0.45\hsize]{./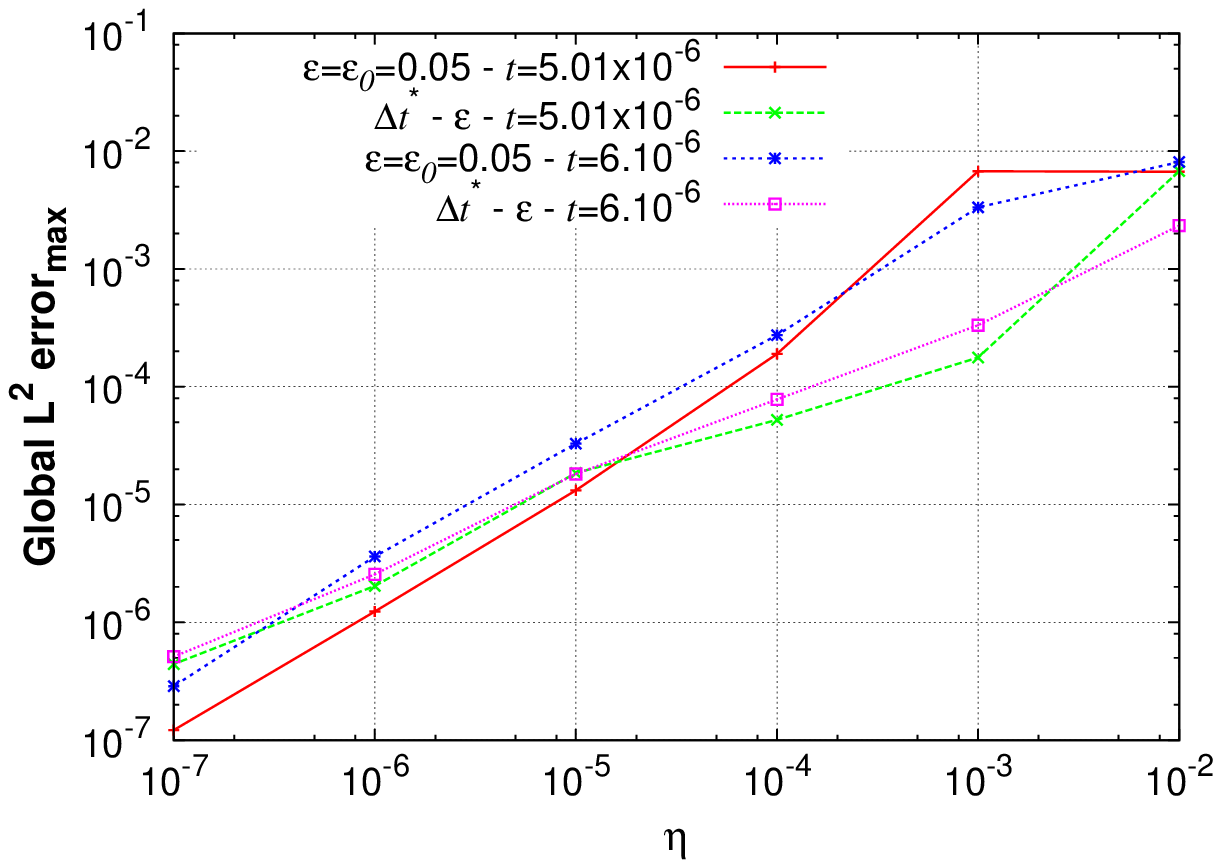}  
   \caption{Repetitive gas discharge model.
Time evolution of accepted and rejected splitting time steps, and imposed electric field
for $t\in[0,10]\,\mu$s (top left), during pulse $t\in[5,5.01]\,\mu$s (top right)
and post-discharge $t\in[5.01,6]\,\mu$s (bottom left).
Bottom right: global $L^2$ errors 
at the end of the pulse ($t=5.01\,\mu$s) and the post-discharge phase ($t=6\,\mu$s)
with and without $\Delta t^\star$ and $\varepsilon$ computation.
 \label{charges2}}
 \end{center}
\end{figure}

For this application, we compute critical $\Delta t^\star$ and possibly $\varepsilon$, for
$N=10$ and $N=100$ in each period in order to perform these computations at least
once during the discharge and post-discharge regimes.
For example, for $t\in [5,6]\, \mu$s as in 
Figure \ref{charges2}, $\varepsilon = \varepsilon _{max}$ with 
$\Delta t^\star \approx 4.3 \times 10^{-9}$ during the pulse, and
$\varepsilon \approx 0.26$ with 
$\Delta t^\star \approx 1.6 \times 10^{-7}$ for the rest of the period.
Similar values are found for the other periods.
Notice that after each pulse, $\Delta t^\star$ is automatically updated 
because $\Delta t$ increases and then $\Delta t$ gets equal to $\Delta t^\star$.
In particular, the important difference between $\Delta t^\star$
for each region, comes naturally from the completely different modeling
parameters and hence, physics description of each regime.

An effective error control is achieved for each part of the phenomenon,
as we can deduce from the global error between splitting and reference 
solutions at the end of the pulse ($t=5.01\,\mu$s) and at the end
of the post-discharge phase ($t=6\,\mu$s).
If we compare these results with the ones obtained without estimating
neither $\Delta t^\star$ nor $\varepsilon$ with 
$\varepsilon=\varepsilon _0=0.05$, we can draw the same conclusions
as in the BZ application.
For less accurate resolutions 
with high tolerances, the proposed strategy corrects the error in the
local error estimates made with $\varepsilon=\varepsilon _0=0.05$;
in particular, for $\eta =10^{-3}$ there is a ratio of about
$10$ between both solutions.
For higher tolerances, $\eta \geq 10^{-2}$, both methods yield
a time step equal to the pulse duration, $\Delta t = 10\,$ns.
On the other hand, 
for the smallest
tolerances,
slightly more accurate solutions are obtained  
with a fixed $\varepsilon=\varepsilon _0$
because smaller splitting time steps are used.

\FloatBarrier

\section{Conclusions}
The present work proposes a new resolution
strategy for stiff evolutionary PDEs
based
on an efficient
splitting scheme previously developed
\cite{article_avc,article_mr}
that considers high order dedicated
integration methods for each subproblem in 
order to properly solve the fastest time
scales associated with each one of them,
and in such a way that the main source of error is led
by the operator splitting error.
Then, to control the error of the resolution, 
it relies
on an adaptive splitting time 
technique that allows to discriminate
the global time scales related to the coupled phenomenon,
given a required level of accuracy of computations.
Compared with a standard procedure for which accuracy is guaranteed
by considering time steps of the order of the fastest scale,
the error control featured by our method  
implies 
an effective accurate resolution 
for problems modeling various physical scenarios,
independent of the
fastest physical time scale,
and an important improvement of
computational efficiency whenever highly unsteady
phenomena is simulated.
In particular, we have successfully applied the proposed
strategy to a simplified model of 
plasma discharges that nevertheless exhibits a broad time scale
spectrum coming from the modeling equations and also 
important and discontinuous variation of parameters
in time and in space that notably increase the
numerical complexity of the problem.

A numerical analysis of the method has been developed in order
to settle a solid mathematical background, and a
complementary numerical procedure was conceived in order
to overcome classical restrictions of adaptive time stepping
schemes whenever asymptotic estimates fail to predict the
dynamics of the problem.
A both mathematical and numerical detailed study of the method
has thus led to a fully complete adaptive time stepping strategy
that guarantees an effective control of the errors of integration
for a large range of time steps;
a key issue for problems for which splitting time steps can
go beyond the fastest physical scales of the problem.
The contribution of this paper is then mainly given 
by a dedicated adaptive time splitting method
for stiff PDEs, and by a complete study 
of the behavior of time stepping schemes based on lower order embedded methods,
for the whole 
set of potential time steps.
In this paper we have always considered fine enough spatial discretizations
in order to perform an evaluation of the theoretical estimates introduced for
the proposed time integration scheme. For higher dimensional problems,
fine spatial discretization becomes a critical issue in terms of computational
costs and a technique of local grid refinement might be a good solution to
guarantee the theoretical behavior of the splitting schemes (see for instance
\cite{article_mr}). Nevertheless, a mathematical study on the splitting errors
with discretized operators will certainly be an useful tool to yet improve the
performance of these techniques. This and other related theoretical aspects
are particular topics of our current research.

\section*{Acknowledgments}
This research was 
supported by a fundamental project grant from ANR (French National Research Agency - ANR Blancs):
\emph{S\'echelles} (project leader S. Descombes),
and by a Ph.D. grant for 
M. Duarte 
from Mathematics (INSMI) and Engineering (INSIS) Institutes of CNRS 
and supported by INCA project (National Initiative for Advanced Combustion - CNRS - ONERA - SAFRAN).

%%-----------------------------
%%      your bibliography
\bibliographystyle{plain}
\bibliography{conf_biblio.bib}

%%-----------------------------

\end{document}